\newtheorem{theorem}{Theorem}
\newtheorem{lemma}[theorem]{Lemma}
\newtheorem{cor}[theorem]{Corollary}
\newcommand{\beq}{\begin{equation}}
\newcommand{\eeq}{\end{equation}}
\newcommand{\barray}{$$\begin{array}{r@{\,}c@{\,}l}}
\newcommand{\earray}{\end{array}$$}
\newcommand{\beqarray}[1]{\begin{equation}\label{#1}\begin{array}{rcl}}
\newcommand{\eeqarray}{\end{array}\end{equation}}
\newcommand{\fig}[3]{\begin{figure}[h]\begin{center}\includegraphics[#1]{#2}\end{center}\caption{#3}\label{fig:#2}\end{figure}}
\newcommand{\deriv}[1]{\frac{\partial}{\partial #1}}
\newcommand{\Gam}{\Gamma}
\newcommand{\Ups}{\Upsilon}
\newcommand{\eps}{\epsilon}
\newcommand{\De}{\Delta}
\newcommand{\nl}{\mathrm{nl}}
\newcommand{\oo}{\mathrm{out}}
\newcommand{\hU}{\hat{U}}
\newcommand{\hV}{\hat{V}}
\newcommand{\hP}{\hat{P}}
\newcommand{\mB}{\mathcal{B}}
\newcommand{\mO}{\mathcal{O}}
\newcommand{\mT}{\mathcal{T}}
\newcommand{\mS}{\mathcal{S}}
\newcommand{\NN}{\mathbb{N}}
\newcommand{\CC}{\mathbb{C}}
\newcommand{\PP}{P}
\newcommand{\QQ}{Q}
\begin{document}

\title[Analogue of the Harer-Zagier formula]{\bf An analogue of the Harer-Zagier formula for unicellular maps on general surfaces}


\author{Olivier Bernardi}
\address{O. Bernardi,  MIT, 77 Massachusetts Avenue, Cambridge MA 02139, USA.}
\thanks{Supported by the ANR project A3 and the European project ExploreMaps - ERC StG 208471.}

\begin{abstract}
A unicellular map is the embedding of a connected graph in a surface in such a way that the complement of the graph is simply connected. In a famous article, Harer and Zagier established a formula for the generating function of unicellular maps counted according to the number of vertices and edges. The keystone of their approach is a counting formula for unicellular maps on orientable surfaces with~$n$ edges, and with vertices colored using every color in~$[q]$ (adjacent vertices are authorized to have the same color). 
We give an analogue of this formula for general (locally orientable) surfaces.

Our approach is bijective and is inspired by Lass's proof of the Harer-Zagier formula. We first revisit Lass's proof and twist it into a bijection between unicellular maps on orientable surfaces with vertices colored using every color in~$[q]$, and maps with vertex set~$[q]$ on orientable surfaces \emph{with a marked spanning tree}. The bijection immediately implies Harer-Zagier's formula and a formula by Jackson concerning bipartite unicellular maps. It also shed a new light on constructions by Goulden and Nica, Schaeffer and Vassilieva, and Morales and Vassilieva. We then extend the bijection to general surfaces and obtain a correspondence between unicellular maps on general surfaces with vertices colored using every color in~$[q]$,  and maps on orientable surfaces with vertex set~$[q]$ \emph{with a marked planar submap}. This correspondence gives an analogue of the Harer-Zagier formula for general surfaces. We also show that this formula implies a recursion formula due to Ledoux for the numbers of unicellular maps with given numbers of vertices and edges.
\end{abstract}

\maketitle

\section{Introduction}
A \emph{map} is a cellular embedding of a connected graph on a surface considered up to homeomorphism (see Section~\ref{sec:orientable} for definitions).  A \emph{planar map} is a map on the sphere. A map is said \emph{unicellular} if it has a single face. For instance, the planar unicellular maps are the plane trees. 
A map is said \emph{orientable} if the underlying surface is orientable.\\

In~\cite{Harer-Zagier} Harer and Zagier considered the problem of enumerating orientable unicellular maps according to the number of edges and vertices (or, equivalently by Euler formula, according to the number of edges and the genus). The keystone of their approach is an enumerative formula for unicellular maps with \emph{colored} vertices (these colorings are not necessarily proper, that is, two adjacent vertices can have the same color): they proved that the number of rooted unicellular maps on orientable surfaces having $n$ edges and vertices colored using \emph{every} color in $[q]:=\{1,\ldots,q\}$ is 
\begin{equation}\label{eq:Harer-Zagier-every-color}
T_{n}(q)=2^{q-1}{n \choose q-1}(2n-1)!!,
\end{equation} 
where $(2n-1)!!=(2n-1)(2n-3)\cdots 1$.
 
From~\eqref{eq:Harer-Zagier-every-color}, it follows that the numbers $\eps_v(n)$ of unicellular maps on orientable surfaces with $n$ edges and $v$ vertices  satisfy
\begin{equation}\label{eq:Harer-Zagier-some-colors}
\sum_{v=1}^{n+1}\eps_v(n)N^v=\sum_{q=1}^{n+1} {N \choose q} 2^{q-1}{n \choose q-1}(2n-1)!! .
\end{equation}
since both sides of the equation represent the number of unicellular maps with vertices colored using \emph{some} of the colors in $[N]$ (since maps with $n$ edges have at most $n+1$ vertices, and the index $q$ on the right-hand-side corresponds to the number of colors really used). From this equation, Harer and Zagier obtained a recurrence relation for the numbers  $\eps_v(n)$ of orientable unicellular maps with $n$ edges and $v$ vertices:
\begin{equation}\nonumber 
(n+1)\eps_v(n)=(4n-2)\eps_{v-1}(n-1)+(n-1)(2n-1)(2n-3)\eps_{v}(n-2).
\end{equation}
The proof of~\eqref{eq:Harer-Zagier-every-color} in~\cite{Harer-Zagier} used a matrix integral argument (see~\cite{Lando-zvonkin} for an accessible presentation). A combinatorial proof was later given by Lass~\cite{Lass:Harer-Zagier}, and subsequently a bijective proof was given by Goulden and Nica~\cite{Goulden:Harer-Zagier}.\\

The goal of this paper is to give an analogue of Equation~\eqref{eq:Harer-Zagier-every-color} for unicellular maps on \emph{general} (i.e. locally orientable) surfaces, and to describe the bijections hiding behind it (see Equation~\eqref{eq:non-orientable-every-color} below). In order to do so, we first simplify slightly the proof of Lass for the orientable case, and then show how to extend it in order to deal with the general case.\\

In Section~\ref{sec:orientable}, we revisit Lass's proof of~\eqref{eq:Harer-Zagier-every-color} in order to obtain a bijection $\Phi$ between orientable unicellular maps colored using the every colors in $[q]$ and orientable \emph{tree-rooted maps} (maps with a marked spanning tree) with vertex set $[q]$. 
This bijective twist given to Lass's proof turns out to simplify certain calculations because tree-rooted maps are easily seen to be counted by the right-hand-side of~\eqref{eq:Harer-Zagier-every-color}. As mentioned above, a bijection was already exhibited by Goulden and Nica for unicellular maps on orientable surfaces in~\cite{Goulden:Harer-Zagier}. However, this bijection is presented in terms of the permutations (in particular the image of the bijection is not expressed in terms of maps) which makes its definition and analysis more delicate. By contrast, it is immediate to see that the bijection $\Phi$ satisfies the following property: 
\begin{enumerate}
\item[($\star$)] for any unicellular map $U$ colored using every colors in $[q]$ and for any colors $i,j\in[q]$, the number of edges of $U$ with endpoints of color $i,j$ is equal to the number of edges between the vertices $i,j$ in the tree-rooted map $\Phi(U)$. 
\end{enumerate}
This property leads in particular to a bijective proof of the following formula by Jackson~\cite{Jackson:kpartite-unicellular-maps} (closely related to another formula found independently by Adrianov~\cite{Adrianov:bicolored-unicellular}) for the number  $T_n(p,q)$ of orientable \emph{bipartite} unicellular maps with the sets of vertices colored with colors $\{1,\ldots,p\}$ and colors $\{p+1,\ldots,p+q\}$ respectively forming the bipartition (the root-vertex being colored in $[p]$): 
\begin{equation}\label{eq:Jackson-bipartite-every-color}
T_n(p,q)=n!{n-1 \choose p-1,q-1,n-p-q+1}.
\end{equation}
A bijective proof of~\eqref{eq:Jackson-bipartite-every-color} was already given by Schaeffer and Vassilieva~\cite{Schaeffer:bij-Jackson-formula}. Again, the presentation there is in terms of permutations. In~\cite{Morales-Vassilieva:unicellular-bicolored-degree}, Morales and Vassilieva analyzed the bijection in~\cite{Schaeffer:bij-Jackson-formula} in order to enumerate bipartite unicellular maps colored in $[q]$ according to the sum of degrees of the vertices of each color. These refined enumerative results can be obtained using the property ($\star$). Thus, in the orientable case, the bijection $\Phi$ gives a unified ways of obtaining the bijective results in~\cite{Goulden:Harer-Zagier,Schaeffer:bij-Jackson-formula,Morales-Vassilieva:unicellular-bicolored-degree}.\\

In Section~\ref{sec:general}, we turn to general surfaces. We extend the bijection $\Phi$ into a correspondence between unicellular maps colored using every color in $[q]$ and orientable \emph{planar-rooted maps} (maps with a marked spanning connected planar submap\footnote{As recalled in Section~\ref{sec:orientable}, orientable maps can be defined in terms of \emph{rotation systems}. A submap of an oriented map $M$ is said \emph{planar} if its rotation system inherited from $M$ defines a planar map.}) with vertex set $[q]$. One of the new ingredients used to prove the correspondence $\Phi$ is a bijection between planar maps and certain ``decorated'' plane trees (which is closely related to a bijection by Schaeffer~\cite{Schaeffer:eulerian} for Eulerian planar maps). Proofs concerning this bijection are given in Section~\ref{sec:proof-planar}.
The correspondence $\Phi$ between colored unicellular maps and planar-rooted maps is not one-to-one, but rather $(2^{r-1})$-to-$(r!)$, where $r$ is the number of faces of the planar submap. This yields the following formula for the number $U_n(q)$ of rooted unicellular maps colored using every color in $[q]$:
\begin{equation}\label{eq:non-orientable-every-color}
U_{n}(q)=\sum_{r=1}^{n-q+2}\frac{q!r!}{2^{r-1}}P_{q,r}\,{2n \choose 2q+2r-4}(2n-2q-2r+1)!!,
\end{equation}
where $P_{q,r}$ is the number of  (unlabeled) rooted planar maps with $q$ vertices and $r$ faces.
More refined enumerative results (in the spirit of~\cite{Morales-Vassilieva:unicellular-bicolored-degree}) can be obtained from the correspondence $\Phi$ because this bijection satisfies:
\begin{enumerate}
\item[($\star\star$)] if a unicellular map $U$  and a planar-rooted map $M$ are in correspondence by $\Phi$, then for any color $i$ the sum of the degrees of the vertices colored $i$ in $U$ is equal to the degree of the vertex $i$ in $M$.
 \end{enumerate}


Let us situate our work among some other enumerative results for unicellular maps on general surfaces. 
In~\cite{Goulden:maps-nonorientable+unicellular} Goulden and Jackson (using representation theory techniques) gave a way of expressing the generating functions of maps on general surfaces in terms of matrix integrals over the Gaussian Orthogonal Ensemble. This expression allowed them to prove a formula for the number  $\hU_{n}(N)$ of unicellular maps on general surfaces with vertices colored using \emph{some} of the colors in $[N]$:
\begin{equation}\label{eq:non-orientable-some-colors}
\hU_{n}(N)=n!\sum_{k=0}^n 2^{2n-k}\sum_{r=0}^n {n-\frac12 \choose n-r}{k+r-1 \choose k}{\frac{N-1}{2} \choose r}+ (2n-1)!!\sum_{q=1}^{N-1}2^{q-1}{N-1 \choose q}{n \choose q-1},
\end{equation}
where for any real number $x$ and any integer $k$ the notation ${x \choose k}$ stands for $x(x-1)\cdots (x-k+1)/k!$. Given that $\hU_{n}(N)=\sum_{q=1}^N{n \choose q} U_{n}(q)$, it is natural to try to prove~\eqref{eq:non-orientable-every-color} starting from~\eqref{eq:non-orientable-some-colors} or the converse, but we have failed to do so.
In~\cite{Ledoux:recursion-unicellular}, Ledoux showed that the matrix integral considered by Goulden and Jackson satisfies a differential equation, implying a recursion formula for the numbers $\eta_v(n)$ of unicellular maps with $n$ edges and $v$ vertices:
\begin{equation}\label{eq:rec-ledoux}
\begin{array}{rcl}
(n+1)\,\eta_v(n)&=&(4n-1)\left(2\,\eta_{v-1}(n-1)-\eta_v(n-1)\right)\\
&&+(2n-3)\left((10n^2-9n)\,\eta_v(n-2)+8\,\eta_{v-1}(n-2)-8\,\eta_{v-2}(n-2)\right)\\
&&+5(2n-3)(2n-4)(2n-5)\left(\eta_v(n-3)-2\,\eta_{v-1}(n-3)\right)\\
&&-2(2n-3)(2n-4)(2n-5)(2n-6)(2n-7)\,\eta_v(n-4),
\end{array}
\end{equation}
valid for all $n\geq 2$. We explain in Section~\ref{sec:recurrence} how to prove this relation starting from~\eqref{eq:non-orientable-every-color}.

We mention lastly that a different bijective approach for orientable unicellular maps was introduced by Chapuy~\cite{Chapuy:unicellular} (giving~\eqref{eq:Harer-Zagier-some-colors} and some new results). In~\cite{Bernardi-Chapuy:unicellular-non-orientable}, this approach was partially adapted to general surfaces, giving an exact formula for the number of unicellular \emph{precubic maps} (maps with vertices of degree either 1 or 3). However, no exact formula for non-precubic unicellular maps on general surfaces has been derived so far by this method.\\

 \section{Unicellular maps and Eulerian tours}\label{sec:defs}
In this section we set our definitions and extend the correspondence used in~\cite{Lass:Harer-Zagier} between colored unicellular maps and certain Eulerian tours on graphs.

Our \emph{graphs} can have loops and multiple edges. A graph $G$ with $n$ edges is said \emph{edge-labelled} if its edges are labelled with distinct numbers in $[n]$. A \emph{map} is a cellular embedding of a connected graph in a (2-dimensional, smooth, compact, boundaryless) surface considered up to homeomorphism. By \emph{cellular} we mean that the \emph{faces} (connected components of the complement of the graph) are simply connected.  Thus, maps are the \emph{gluings of polygons} obtained by identifying the edges of the polygons two by two, considered up to homeomorphism. In particular, the \emph{unicellular maps} (maps with a single face) with $n$ edges are the gluings obtained from a $2n$-gon (polygon with $2n$ edges); see Figure~\ref{fig:unicellular-oriented}.
A polygon is \emph{rooted} by orienting one of its edges and calling it the \emph{root-edge}. A \emph{rooted-map} is obtained by gluing polygons one of which is rooted; this distinguish one of the edges as the \emph{root-edge} and specifies a direction and a side for the root. For instance, the root-edge, its side and direction are indicated by an arrow labelled 1 in Figure~\ref{fig:unicellular-oriented}. The \emph{root-vertex} is the origin of the root-edge. 

\fig{width=10cm}{unicellular-oriented}{An orientable rooted unicellular map seen as a gluing of a polygon (left), and seen as an embedded graph (right). The root is indicated by the arrow labelled 1.}


Given two edges of a polygon, there are two ways of gluing them together: we call \emph{orientable gluing} the identification of the edges giving a topological cylinder, and \emph{non-orientable gluing} the identification giving a M\"obius band. 
It is easy to see that a unicellular map obtained by gluing the edges of a polygon in pairs is orientable if and only if each of the gluing is orientable.
Hence there are $2^n(2n-1)!!$ rooted unicellular maps with $n$ edges and $(2n-1)!!$ orientable ones (since $(2n-1)!!$ is the number of ways of partitioning a set of $2n$ elements in $n$ non-ordered pairs). Observe that if the edges of the rooted $2n$-gon are oriented into a directed cycle, then the orientable gluings are the one for which any pair of edges glued together have opposite orientations along the gluing. 

We call $q$\emph{-colored} a map in which the vertices are colored using \emph{every color} in $[q]=\{1,\ldots,q\}$ (adjacent vertices can have the same color). A key observation made in~\cite{Lass:Harer-Zagier} is that making the tour of the face of a $q$-colored unicellular map starting from the root induces a tour on a graph having vertex set $[q]$. We now make this statement precise. Les $G$ be an undirected edge-labelled graph. We call \emph{bi-Eulerian} tour of $G$ a directed path starting and ending at the same vertex called the \emph{origin} and using every edge exactly twice. If $G$ has loops, the convention is that tours can either use the loop twice in the same direction, or in the two opposite directions (so that there are 2 different bi-Eulerian tours on the graph consisting of a single loop, and more generally, there are $(2n)!$ bi-Eulerian tours on an edge-labelled graph consisting of $n$ loops). 

\begin{lemma}\label{lem:uni=Eulerian}
There is a bijection $\Xi$ between rooted edge-labelled $q$-colored unicellular maps with $n$ edges and the set of pairs $(G,B)$, where $G$ is an edge-labelled connected graph with $n$ edges and vertex set $[q]$, and $B$ is a bi-Eulerian tour of $G$. Moreover, if $(G,B)=\Xi(U)$ then 
\begin{itemize} 
\item the unicellular map $U$ is orientable if and only if the tour $B$ never uses an edge twice in the same direction,
\item for any colors $i,j$ in $[q]$ the number of edges in $U$ having endpoints of colors $i,j$ is equal to the number of edges in $G$ having endpoints $i,j$.
\end{itemize} 
\end{lemma}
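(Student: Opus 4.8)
The plan is to realize $\Xi$ concretely through the \emph{tour of the face}, extending Lass's orientable construction by additionally keeping track of the \emph{direction} in which each edge is traversed. Given a rooted edge-labelled $q$-colored unicellular map $U$ with $n$ edges, view it as a gluing of a $2n$-gon and traverse the boundary of its unique face once, starting along the root in its direction. This traversal crosses each of the $n$ edges exactly twice and visits a cyclic sequence of $2n$ corners, each carrying a color in $[q]$. I would then define $G$ to be the quotient of the underlying graph of $U$ obtained by merging all vertices of the same color: its vertex set is exactly $[q]$ (since every color is used), each edge of $U$ joining colors $i,j$ becomes an edge of $G$ joining $i,j$ carrying the same label, and $G$ is connected because it is a quotient of the connected graph underlying $U$. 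The tour of the face then projects to a closed directed walk $B$ on $G$ that begins at the color of the root-vertex and uses every edge exactly twice, that is, a bi-Eulerian tour. This defines $\Xi(U)=(G,B)$.

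For the inverse I would reconstruct $U$ from $(G,B)$ by reversing this procedure. Reading $B$ as a sequence of $2n$ directed edge-traversals (each labelled edge of $G$ occurring twice), I build a rooted $2n$-gon whose successive boundary edges are labelled and directed according to $B$, with the root being the first traversal, and I color the corner sitting between two consecutive traversals by the vertex of $G$ visited there. I then glue the two polygon-edges bearing the same label. Since the result is the gluing of a single polygon, it is automatically a unicellular map with $n$ edges; the colors assigned to any two corners that get identified in the gluing agree, because both record the same vertex of $G$, so the coloring descends to the vertices and uses every color in $[q]$. The only point requiring care is the \emph{direction} of each gluing, which I take to be dictated by $B$: orienting the boundary of the $2n$-gon into a directed cycle, two edges labelled alike are glued so as to match their $G$-endpoints, which forces an orientable (cylinder) gluing precisely when the two traversals run in opposite directions, and a non-orientable (M\"obius) gluing when they run in the same direction.

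The two displayed properties then follow. Using the observation recalled earlier --- that, for the directed-cycle orientation of the polygon, a gluing is orientable exactly when the two glued edges have opposite orientations along the gluing --- the map $U$ is orientable iff every gluing is orientable iff each edge of $G$ is traversed once in each direction by $B$, i.e. iff $B$ never uses an edge twice in the same direction; this is the first bullet. The second bullet is immediate, since by construction the edges of $U$ with endpoints colored $i,j$ are in label-preserving bijection with the edges of $G$ joining $i$ and $j$.

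The main obstacle is checking that the forward and backward constructions are genuinely mutually inverse, i.e. that the book-keeping of corner identifications, edge labels, and gluing directions is consistent in both directions. Concretely, I would verify that the corner-coloring produced in the inverse map recovers exactly the color sequence read off in the forward map, that the orientable/non-orientable dictionary of the previous paragraph matches the identifications of corners forced by matching $G$-endpoints, and that the distinguished start of $B$ corresponds bijectively to the root data (oriented edge together with a side). Once this consistency is established, $\Xi$ and its inverse are well-defined mutually inverse maps, and the stated equivalences follow.
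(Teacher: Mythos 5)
Your proposal is correct and takes essentially the same approach as the paper: both view the unicellular map as a colored rooted $2n$-gon with a color-respecting gluing, read the corner colors along the face tour to obtain the bi-Eulerian tour on the color-quotient graph with vertex set $[q]$, and use the dichotomy opposite-direction/same-direction traversal $\leftrightarrow$ orientable/non-orientable gluing (with the loop convention absorbing the case where endpoint-matching does not determine the gluing). The only difference is organizational: the paper factors the correspondence into a chain of elementary bijections (coloring $\leftrightarrow$ directed graph with an Eulerian tour, gluing $\leftrightarrow$ pairing of arcs), so the mutual-inverse verification that you outline but defer is automatic there.
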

 
Lemma~\ref{lem:uni=Eulerian} is illustrated by Figure~\ref{fig:unicolor}.

\fig{width=12cm}{unicolor}{A gluing of a 3-colored rooted 12-gon (giving a non-orientable 3-colored unicellular map), and the corresponding Eulerian tour. The non-orientable gluings are marked with a cross.}

\begin{proof} 
A $q$-colored unicellular map $U$ is obtained by coloring the vertices of a rooted $2n$-gon with every color in $[q]$ and then choosing a gluing of the edges respecting the colors. We consider the orientation of the $2n$-gon into a directed cycle respecting the orientation of the root-edge. Choosing a coloring of the vertices of the $2n$-gon using every colors in $[q]$ is the same as choosing a connected directed graph $\vec{G}$ with vertex set $[q]$ together with an Eulerian tour $E$ of $\vec{G}$ (a directed path starting and ending at the same vertex and using every edge twice). Indeed, the $k$th edge of the Eulerian tour identifies with the $k$th edge around the $2n$-gon (starting from the root-edge). 

Now, given a colored $2n$-gon and the associated pair $(\vec{G},E)$, the gluings of the edges of the polygon respecting the colors are clearly in bijection with the partitions $\pi$ of the $2n$ arcs of $\vec{G}$ into pairs of arcs having the same endpoints, with the convention that two loops incident to the same vertex can be paired in two ways (either paired ``in opposite direction'' or ``in the same direction''). Also, labelling the edges of the unicellular map obtained is the same as labelling the pairs in $\pi$. 
Lastly, the triples $(\vec{G},E,\pi)$ clearly identify with the pairs $(G,B)$, where $G$ is an edge-labelled connected graph with $n$ edges and vertex set $[q]$, and $B$ is a bi-Eulerian tour of $G$. The bijection established has all the claimed properties.
\end{proof}

\medskip

\section{Orientable unicellular maps}\label{sec:orientable}
In this section, we twist Lass's proof~\cite{Lass:Harer-Zagier} of~\eqref{eq:Harer-Zagier-every-color} into a bijection between colored unicellular maps and tree-rooted maps. The bijection $\Phi$ obtained can be seen as the (unified) ``map version'' of the bijections presented in terms of permutations in~\cite{Goulden:Harer-Zagier} and~\cite{Schaeffer:bij-Jackson-formula} to prove respectively~\eqref{eq:Harer-Zagier-every-color} and~\eqref{eq:Jackson-bipartite-every-color}.

We first recall the so-called \emph{BEST Theorem}. An \emph{Eulerian tour} of a directed graph $G$ is a directed path starting and ending at the same vertex called the \emph{origin} and using each arc exactly once.  A spanning tree of $G$ is \emph{directed toward} a vertex $v_0$ if the path between any vertex vertex $v$ and $v_0$ in $T$ is directed toward $v_0$.
 
\begin{lemma}[BEST Theorem]  
Let $G$ be an edge-labelled directed graph such that any vertex is incident to as many ingoing and outgoing arcs, and let $v_0$ be a vertex.
Then, the set of Eulerian tours of $G$ with origin $v_0$ is in bijection with the set of pairs $(T,\tau)$, where $T$ is a spanning tree directed toward $v_0$, and $\tau$ is a choice for each vertex $v$ of a total order of the outgoing arcs not in $T$. 
\end{lemma}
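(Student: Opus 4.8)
The plan is to write down the bijection explicitly in both directions and to check that the two maps are mutually inverse, using the balanced in/out-degree hypothesis at every step. \emph{Forward map.} Given an Eulerian tour $E$ with origin $v_0$, I would record for each vertex $v\neq v_0$ its \emph{last-exit arc} $t(v)$, the outgoing arc of $v$ that $E$ traverses last among all arcs leaving $v$, and set $T=\{t(v):v\neq v_0\}$. Each $v\neq v_0$ contributes exactly one arc and $v_0$ contributes none, so $T$ has exactly one arc fewer than the number of vertices of $G$, with every non-root vertex of out-degree $1$ and $v_0$ of out-degree $0$; to conclude that $T$ is a spanning tree directed toward $v_0$ it then suffices to rule out an oriented cycle. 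The order $\tau$ is read off directly from $E$: at each vertex $v$, order the outgoing arcs not in $T$ by the order in which $E$ uses them.

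\emph{Acyclicity (first crux).} Suppose $T$ contained an oriented cycle $C$. Since $v_0$ has no outgoing arc in $T$, the cycle avoids $v_0$. Among the arcs of $C$ let $t(v)=(v\to w)$ be the one $E$ traverses last. Just after using $t(v)$ the tour sits at $w\neq v_0$ and must continue, so $w$ still has an unused outgoing arc at that moment; but $t(w)$ lies on $C$ and hence was used strictly before $t(v)$, and being the \emph{last} arc out of $w$ it forces every arc out of $w$ to have been used already --- a contradiction. So $T$ is acyclic, and a functional graph pointing toward $v_0$ with no cycle is exactly a spanning tree directed toward $v_0$.

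\emph{Inverse map.} From a pair $(T,\tau)$ I would run the greedy walk from $v_0$ that, at each vertex, takes the next unused outgoing arc in the order listing first the non-tree arcs as prescribed by $\tau(v)$ and then, for $v\neq v_0$, the tree arc $t(v)$ last. Balance of degrees shows the walk can only get stuck at $v_0$: if it entered some $v\neq v_0$ and found all out-arcs used, it would have entered $v$ once more than it left it, forcing used-in$(v)>\text{out-deg}(v)=\text{in-deg}(v)$, which is impossible. Hence the walk closes up at $v_0$, and in this closed walk used-in$(u)=$ used-out$(u)$ for every vertex $u$.

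\emph{Exhaustion (second crux).} It remains to see that every arc is used. If some vertex $w$ had an unused outgoing arc, then, since its tree arc is scheduled last, $t(w)$ itself is unused; following the tree path $w\to\cdots\to v_0$, an unused incoming tree arc at a vertex $u$ gives used-in$(u)<\text{in-deg}(u)=\text{out-deg}(u)=$ used-out$(u)$, so $u$ too has an unused outgoing arc and hence an unused tree arc. Propagating to $v_0$ yields an unused arc into $v_0$, contradicting that halting at $v_0$ forces all its out-arcs, and so by balance all its in-arcs, to be used. Thus the walk is an Eulerian tour. Finally the two constructions are inverse: in the reconstructed tour $t(v)$ is used last among the out-arcs of $v$ and the others in $\tau$-order, so the forward map recovers $(T,\tau)$; conversely the greedy walk is forced at every vertex, so starting from a tour, forming $(T,\tau)$, and re-walking reproduces it. I expect the only genuine content to be the two highlighted arguments, the rest being bookkeeping with the degree condition.
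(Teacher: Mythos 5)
Your proof is correct and is essentially the paper's own approach: the paper does not prove the BEST Theorem but cites Stanley's book, describing in a footnote exactly your forward map ($T$ consists of the last arcs used to leave each vertex $v\neq v_0$, and $\tau$ records the order in which the other out-arcs are used). Your acyclicity argument, greedy reconstruction, and exhaustion-via-tree-path argument are the standard (and correct) verification of that same bijection, filling in the details the paper delegates to the reference.
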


The proof of the BEST Theorem can be found in~\cite{Stanley:volume2}\footnote{To an Eulerian tour with origin $v_0$, the bijection  associates the pair $(T,\tau)$, where $T$ is the spanning tree oriented toward $v_0$ made of the last arcs used to leave each of the vertices $v\neq v_0$, and the total order $\tau$ at a vertex $v$ records the order in which the other arcs leaving $v$ are used during the tour.}. Following the method of Lass we will now use the BEST Theorem to count bi-Eulerian tours. But we will shortcut this counting by considering \emph{rotation systems} of graphs.
Let $G$ be a graph. A \emph{half-edge} is an incidence between an edge and a vertex (one can think of half-edges as obtained from cutting an edge at a midpoint); by convention a loop corresponds to two half-edges both incident to the same vertex.  A \emph{rotation system} of $G$ is a choice for each vertex $v$ of a cyclic ordering of the half-edges incident to $v$. A \emph{rooted rotation system} with \emph{root-vertex} $v_0$ is a choice for each vertex $v\neq v_0$ of a cyclic order of the  half-edges incident to $v$ and the choice of a total order of the half-edges incident to $v_0$. We now recall how to associate a rooted rotation system to a rooted orientable map $M$. Given a rooted map $M$ of underlying graph $G$ on an orientable surface $\mS$, one can define the ``positive orientation'' of the surface $\mS$ by requiring that the marked side of the root-edge precedes (the outgoing half-edge of) the root-edge in counterclockwise order around the root-vertex. Then one defines the \emph{rotation system} $\rho(M)$ of $G$ as follows: around any non-root vertex $v$ the rotation system $\rho(M)$ is the cyclic order of the half-edges in counterclockwise direction around $v$, and around the root-vertex $v_0$ the rotation system  $\rho(M)$ is the total order obtained by breaking the cyclic counterclockwise order in order to make the outgoing half-edge of the root-edge be the smallest element. The proof of the following classical result can be found in~\cite{Mohar:graphs-on-surfaces}.

\begin{lemma}[Embedding Theorem]  
For any connected graph $G$, the mapping $\rho$ is a bijection between the rooted orientable maps of underlying graph $G$ and the rooted rotation systems of~$G$. 
\end{lemma}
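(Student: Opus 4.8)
The plan is to establish the Embedding Theorem by exhibiting an explicit inverse to the map $\rho$, since the real content is the reconstruction of a rooted orientable map from combinatorial data. First I would recall the standard construction that turns a rooted rotation system of $G$ into a collection of oriented polygons glued along edges: starting from a graph with a rotation system, one forms the \emph{oriented ribbon graph} by thickening each vertex into a disc and each edge into a band, using the cyclic orders to prescribe how bands attach around each disc. The boundary components of this ribbon surface are read off by the usual face-tracing permutation, namely by following the rule ``arrive along a half-edge, leave along the next half-edge in the cyclic order at that vertex,'' which produces the faces of the embedded graph. Capping each boundary component with a disc yields a closed orientable surface $\mS$ together with a cellular embedding of $G$; the root data (the total order at $v_0$ rather than a mere cyclic order) pins down the marked side and direction of the root-edge, so that the outgoing root half-edge is smallest in the broken cyclic counterclockwise order.

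Second I would verify that this construction is well-defined up to homeomorphism and that it genuinely lands among rooted orientable maps: the surface built from discs and bands in this coherent way is orientable because the cyclic orders give a consistent global orientation, and it is connected because $G$ is connected. The cellularity is automatic since the complement of $G$ consists precisely of the capped boundary discs. Thus I obtain a map $\Psi$ from rooted rotation systems to rooted orientable maps.

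Third I would check that $\Psi$ and $\rho$ are mutually inverse. One direction, $\rho\circ\Psi=\id$, amounts to observing that the ``positive orientation'' defined in the statement is exactly the orientation induced by the ribbon construction, so that reading the counterclockwise cyclic order around each vertex of $\Psi$(rotation system) returns the original rotation system; the rooting convention matches by design. The other direction, $\Psi\circ\rho=\id$, follows because any rooted orientable map is recovered from its ribbon structure up to homeomorphism---this is the classical fact that an oriented surface with a cellularly embedded graph is determined by its rotation system, which one proves by cutting the surface along the edges into the faces and reconstructing it from the gluing data encoded by $\rho(M)$.

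The main obstacle I expect is the ``$\Psi\circ\rho=\id$'' direction, i.e.\ arguing rigorously that two rooted maps with the same rotation system are homeomorphic. This requires the nontrivial topological input that a cellular embedding is reconstructible from its combinatorial face structure, together with care that the surface be closed and orientable so that no extra handles or cross-caps can appear undetected. Since the paper explicitly defers the proof to~\cite{Mohar:graphs-on-surfaces}, I would cite that reference for this topological reconstruction step and devote the written argument mainly to the explicit description of $\Psi$ and the matching of the root and orientation conventions.
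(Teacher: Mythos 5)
Your proposal is correct, and it matches the paper's treatment: the paper offers no argument of its own for this classical lemma, deferring entirely to~\cite{Mohar:graphs-on-surfaces}, which is precisely the ribbon-graph (band decomposition) construction you sketch. Since you also defer the one genuinely topological step---that a rooted orientable map is determined up to homeomorphism by its rotation system---to the same reference, your write-up is essentially an expanded version of the paper's citation rather than a different route.
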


We are now ready to state the relation between bi-Eulerian tours and \emph{tree-rooted maps} (rooted maps with a marked spanning tree).

\begin{lemma}\label{lem:Eulerian=embedding}
Let $G$ be a connected edge-labelled undirected graph.  The set of bi-Eulerian tours of $G$ that never take an edge twice in the same direction are in bijection with pairs $(T,\sigma)$ made of a spanning tree $T$ and a rooted rotation system $\sigma$. Consequently, this set of bi-Eulerian tours is in bijection with the set of tree-rooted maps on orientable surfaces having underlying graph~$G$.  
\end{lemma}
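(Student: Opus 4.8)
The plan is to reduce the statement to the BEST Theorem by encoding the relevant bi-Eulerian tours as honest Eulerian tours of a directed graph, and then to repackage the combinatorial data produced by BEST as a spanning tree together with a rooted rotation system.

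First I would form the directed graph $\vec{G}$ obtained from $G$ by replacing each edge by a pair of opposite arcs; equivalently, for each half-edge $h$ of $G$ incident to a vertex $v$ we put one arc of $\vec{G}$ leaving $v$ along $h$ (for a loop the two half-edges give the two opposite traversals, matching the loop convention in the definition of bi-Eulerian tours). A bi-Eulerian tour of $G$ that never uses an edge twice in the same direction uses each edge exactly once in each direction, hence is precisely an Eulerian tour of $\vec{G}$. Moreover $\vec{G}$ is balanced, since at every vertex $v$ the outgoing arcs are in natural bijection with the half-edges incident to $v$ (and likewise the ingoing arcs), so the in- and out-degrees agree. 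Taking the origin $v_0$ of the tour as the distinguished vertex, the BEST Theorem then identifies these Eulerian tours with pairs $(T',\tau)$, where $T'$ is a spanning tree of $\vec{G}$ directed toward $v_0$ and $\tau$ records, at each vertex $v$, a total order of the outgoing arcs at $v$ that are not in $T'$.

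Next I would translate $(T',\tau)$ into a spanning tree $T$ and a rooted rotation system $\sigma$. Since for each edge of $G$ both orientations appear in $\vec{G}$, a spanning tree $T'$ directed toward $v_0$ is the same datum as an undirected spanning tree $T$ of $G$: the edges underlying $T'$ form a spanning tree, and the orientation toward $v_0$ is then forced. Under the correspondence between outgoing arcs at $v$ and half-edges at $v$, the arc of $T'$ leaving a non-root vertex $v$ singles out one half-edge $h_T(v)$, and $\tau$ supplies a total order of the remaining $\deg(v)-1$ half-edges at $v$. I then define $\sigma$ at $v$ to be the cyclic order obtained by taking $h_T(v)$ as reference and listing the other half-edges after it according to $\tau$; as a cyclic order of $\deg(v)$ elements is the same datum as a total order of the $\deg(v)-1$ elements following a fixed reference, this is a bijection between the BEST data at $v$ and cyclic orders at $v$. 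At the root $v_0$ no arc of $T'$ leaves $v_0$, so $\tau$ is a total order of \emph{all} half-edges at $v_0$, which is exactly the root datum of a rooted rotation system. As every step is reversible, this produces the announced bijection between the bi-Eulerian tours and the pairs $(T,\sigma)$.

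Finally, the ``consequently'' is immediate from the Embedding Theorem: the rooted rotation systems $\sigma$ of $G$ are in bijection with the rooted orientable maps with underlying graph $G$, and carrying along the marked spanning tree $T$ turns the pairs $(T,\sigma)$ into tree-rooted maps on orientable surfaces with underlying graph $G$. I expect the only delicate point to be the bookkeeping of the third paragraph — verifying that outgoing arcs correspond to half-edges (including the loop convention), and that using the tree half-edge $h_T(v)$ as the reference of the cyclic order correctly converts the ``total order of non-tree arcs'' produced by BEST into a rotation, while the root vertex is instead handled by a bare total order.
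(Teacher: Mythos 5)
Your proof is correct and follows essentially the same route as the paper's: replace each edge of $G$ by two opposite arcs to form $\vec{G}$, identify the orientable bi-Eulerian tours with Eulerian tours of $\vec{G}$, apply the BEST Theorem, and then convert the pair $(T',\tau)$ into $(T,\sigma)$ via the correspondence between outgoing arcs and half-edges, using the tree arc at each non-root vertex as the reference that turns a total order of the remaining half-edges into a cyclic order. Your write-up even makes this last conversion (and the loop bookkeeping) slightly more explicit than the paper does, but the argument is the same.
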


\begin{proof} 
Let $v_0$ be a vertex of $G$. We will establish a bijection between the set $\mB_0$ of bi-Eulerian tours of $G$ of origin $v_0$ that never take an edge twice in the same direction and the set $\mT_0$ of pairs $(T,\sigma)$ made of a spanning tree $T$ and a rooted rotation system $\sigma$ with root-vertex $v_0$.

Let $\vec{G}$ be the directed graph obtained from $G$ by replacing each edge $e$ by two arcs in opposite directions taking the same label as $e$. Observe that the arcs of $\vec{G}$ are all distinguishable except for pairs of arcs corresponding to a loop of $G$. The set $\mB_0$ of bi-Eulerian tours of $G$ is clearly in bijection with the set of Eulerian tours of $\vec{G}$ having origin $v_0$. Hence, by the BEST Theorem, the set $\mB_0$ is in bijection with the set $\vec{\mT}_0$ of pairs $(T_0,\tau)$,  where $T_0$ is a spanning tree of $\vec{G}$ directed toward $v_0$ and $\tau$ is a choice of a total ordering of the outgoing arcs of $\vec{G}$ not in $T_0$ at each vertex. Again here if two arcs comes from a loop of $G$, the total order $\tau$ does not distinguish between these two arcs.

It only remains to show that the set $\vec{\mT}_0$ is in bijection with the set $\mT_0$. First of all, the spanning trees of $\vec{G}$ directed toward $v_0$ are in natural bijection with the spanning trees of $G$. Moreover, the arcs of $\vec{G}$ and the half-edges of $G$ are in natural correspondence (with arcs of $\vec{G}$ indistinguishable if and only if they come from the same loop, and half-edges of $G$ indistinguishable if and only if they come from the same loop).
Therefore, for the vertex~$v_0$, the total orderings of the outgoing arcs of $\vec{G}$ incident to $v_0$ correspond bijectively to the total orderings of the half-edges of $G$ incident to $v_0$. Similarly,  for any vertex $v\neq v_0$ and any spanning tree $T_0$ directed toward $v_0$, the total orderings of the outgoing arcs of $\vec{G}$ not in $T_0$ (that is, the total orderings of all the outgoing arcs except the one leading $v$ to its parent in $T_0$) corresponds bijectively to the cyclic orderings of the half-edges of $G$ incident to $v$. This gives a bijection between $\vec{\mT}_0$ and $\mT_0$.
\end{proof}

We now summarize the consequences of Lemmas~\ref{lem:uni=Eulerian} and~\ref{lem:Eulerian=embedding}. 

\begin{theorem} \label{thm:orientable}
There is a bijection $\Phi$ between $q$-colored rooted unicellular maps with $n$ edges on orientable surfaces and tree-rooted maps with $n$ edges and vertex set $[q]$ on orientable surfaces. 
Moreover for all $i,j\in[q]$ the number of edges with endpoints of colors $i,j$ in a $q$-colored unicellular map $U$ is equal to the number of edges between vertices $i$ and $j$ in the tree-rooted map $\Phi(U)$.
\end{theorem}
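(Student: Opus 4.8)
The plan is to obtain Theorem~\ref{thm:orientable} simply by composing the two bijections already established in Lemmas~\ref{lem:uni=Eulerian} and~\ref{lem:Eulerian=embedding}, so that no new combinatorial construction is needed. First I would fix the target object on each side. On the left, a $q$-colored rooted unicellular map $U$ with $n$ edges on an orientable surface; on the right, a tree-rooted map (rooted map with a marked spanning tree) with $n$ edges and vertex set $[q]$ on an orientable surface. The key remark is that Lemma~\ref{lem:uni=Eulerian} already characterizes orientability on the unicellular side in terms of the bi-Eulerian tour: $U$ is orientable \emph{if and only if} the associated tour $B$ never uses an edge twice in the same direction. This is precisely the class of bi-Eulerian tours handled by Lemma~\ref{lem:Eulerian=embedding}, which is the hinge that makes the composition work.

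The construction of $\Phi$ then goes as follows. Starting from an orientable $q$-colored rooted unicellular map $U$, apply $\Xi$ from Lemma~\ref{lem:uni=Eulerian} to obtain a pair $(G,B)$, where $G$ is an edge-labelled connected graph with $n$ edges and vertex set $[q]$ and $B$ is a bi-Eulerian tour of $G$. Since $U$ is orientable, the orientability clause of Lemma~\ref{lem:uni=Eulerian} guarantees that $B$ never takes an edge twice in the same direction. Now apply the bijection of Lemma~\ref{lem:Eulerian=embedding} to $(G,B)$: it sends such a bi-Eulerian tour to a tree-rooted map on an orientable surface with underlying graph $G$. The resulting tree-rooted map has $n$ edges and vertex set $[q]$, as required. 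Since both maps are bijections on their respective (orientable) domains, their composition $\Phi$ is a bijection. One must also observe that the edge-labels, which are auxiliary bookkeeping, can be summed out consistently on both sides (both the unicellular maps and the tree-rooted maps carry the same edge-labelling data), so the bijection descends to the unlabelled objects with the same counting.

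For the second statement, I would trace the color/endpoint data through the composition. Lemma~\ref{lem:uni=Eulerian} already asserts that for any colors $i,j\in[q]$, the number of edges of $U$ with endpoints of colors $i,j$ equals the number of edges of $G$ with endpoints $i,j$. The bijection of Lemma~\ref{lem:Eulerian=embedding} does not alter the underlying graph $G$: it only equips $G$ with a spanning tree and a rooted rotation system, hence with an embedding, but the edge set and the incidences of $G$ are preserved. Therefore the number of edges between vertices $i$ and $j$ in the tree-rooted map $\Phi(U)$ is exactly the number of edges between $i$ and $j$ in $G$, which by Lemma~\ref{lem:uni=Eulerian} equals the number of edges of $U$ with endpoints of colors $i,j$. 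Chaining these two equalities yields property~($\star$).

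I do not expect a serious obstacle here, since the theorem is genuinely a formal corollary of the two preceding lemmas; the work was already done in setting them up. The one point that deserves care is the matching of orientability conditions: I must check that the ``never takes an edge twice in the same direction'' condition appearing in Lemma~\ref{lem:uni=Eulerian} is \emph{identical} to the hypothesis of Lemma~\ref{lem:Eulerian=embedding}, rather than merely similar, so that the image of orientable unicellular maps under $\Xi$ lands exactly in the domain where Lemma~\ref{lem:Eulerian=embedding} applies (with no tours left over and none missing). A secondary bookkeeping check is that the notion of ``root'' is transported coherently: the origin $v_0$ of the bi-Eulerian tour, the root-vertex of the rooted rotation system, and the root-vertex of the tree-rooted map must all be identified, which they are by construction of $\Xi$ and of the rotation-system correspondence. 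Once these identifications are confirmed, the statement follows immediately.
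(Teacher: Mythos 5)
Your proposal is correct and follows essentially the same route as the paper: the paper's own proof is precisely the composition of Lemmas~\ref{lem:uni=Eulerian} and~\ref{lem:Eulerian=embedding} (using the orientability clause of the former to land in the domain of the latter), followed by the observation that, since the edges of a rooted map are all distinguishable, each unlabelled rooted map corresponds to exactly $n!$ edge-labelled ones on both sides, so the edge-labels can be removed. Your explicit tracing of property~($\star$) through the two lemmas and your check that the tour's origin matches the root-vertex are exactly the verifications the paper leaves implicit.
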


\fig{width=\linewidth}{bijection-orientable}{Bijection between orientable colored unicellular maps and tree-rooted maps (spanning trees are drawn in thick lines).}

\begin{proof} Lemmas~\ref{lem:uni=Eulerian} and~\ref{lem:Eulerian=embedding} imply a bijection having the claimed properties between \emph{edge-labelled} maps. However, the edges of a non-labelled rooted maps are all distinguishable. Hence each non-labelled rooted maps with $n$ edges correspond to $n!$ different edge-labelled rooted maps.
\end{proof}

Theorem~\ref{thm:orientable} is illustrated in Figure~\ref{fig:bijection-orientable}. Before closing this section we explain how to obtain~\eqref{eq:Harer-Zagier-every-color} and~\eqref{eq:Jackson-bipartite-every-color} from Theorem~\ref{thm:orientable}. We start with the proof of~\eqref{eq:Harer-Zagier-every-color}. By Theorem~\ref{thm:orientable}, $T_n(q)$ is the number of tree-rooted maps with $n$ edges and vertex set $[q]$ on orientable surfaces. One can think of a tree-rooted maps in terms of its rotation system: one can first choose the tree together with its rooted rotation system (that is, a rooted plane tree) and then add the other edges. 
It is well known that the rooted plane trees with $q$ unlabeled vertices are counted by the Catalan number $\frac{(2q-2)!}{q!(q-1)!}$, hence the number of  trees with vertex set $[q]$ is $\frac{(2q-2)!}{(q-1)!}$. In order to add the $n-q+1$ edges not in the tree, we need first to choose where to insert the $2n-2q+2$ half-edges around the tree (equivalently, how to complete the rooted rotation system of the tree with new half-edges). A rooted plane tree with 
$q$ vertices has $2q-1$ positions where a half-edge can be inserted, hence there are   ${2n \choose 2q-2}$ ways to add the $2n-2q+2$ additional half-edges around the tree. Lastly  there are $(2n -2q+1)!!$ ways of pairing these half edges.  This gives~\eqref{eq:Harer-Zagier-every-color}: 
$$T_n(q)=\frac{(2q-2)!}{(q-1)!}\, {2n \choose 2q-2} \, (2n -2q+1)!!= 2^{q-1}{n \choose q-1}(2n-1)!!.$$

We prove~\eqref{eq:Jackson-bipartite-every-color} in a similar fashion. By Theorem~\ref{thm:orientable}, $T_n(p,q)$  is the number of (bipartite) tree-rooted maps on orientable surfaces with $n$ edges and vertex set $[p+q]$ such that any edge joins a vertex in $A=[p]$ to a vertex in $B=[p+1,\ldots,p+q]$, and the root-vertex is in $A$. It is well known that the bicolored trees with $p$ black unlabeled vertices, and $q$ white unlabeled vertices (with a black root-vertex and every edge joining a black vertex to a white vertex) are counted by the Narayama number $\frac{(p+q-1)!(p+q-2)!}{p!(p-1)!q!(q-1)!}$. Hence, there are $\frac{(p+q-1)!(p+q-2)!}{(p-1)!(q-1)!}$ ways to choose a vertex-labelled tree such that any edge joins a vertex in $A$ to a vertex in $B$, and the root-vertex is in $A$. The $n-p-q+1$ other edges have one half-edge incident to a vertex in $A$ and one half-edge incident to a vertex in  $B$. There are $p+q$ positions to insert a half-edge around vertices in $A$, and $p+q-1$  positions to insert a half-edge around vertices in $B$. Hence,  there are ${n\choose p+q}{n-1\choose p+q-1}$ ways to add the  $2n-2q+2$ half-edges around the tree. Lastly, there are $(n-p-q+1)!$ ways of pairing these half-edges. This gives~\eqref{eq:Jackson-bipartite-every-color}.\\


\section{General unicellular maps}\label{sec:general}
In this section we extend the bijection $\Phi$ to  unicellular maps on general surfaces. We start with some definitions. A \emph{balanced orientation} of an undirected graph $G$ is an orientation of a (possibly empty) subset of edges of $G$ such that for any vertex $v$ the numbers of ingoing and outgoing half-edges incident to $v$ are equal. If $G$ has a vertex $v_0$ distinguished as the \emph{root-vertex}, then we say that a balanced orientation $\mO$ and a spanning tree $T$ are \emph{compatible} if any oriented edge in $T$ has an orientation that coincides with the orientation of $T_0$ toward $v_0$. The \emph{external weight} of the pair $(\mO,T)$ is the number of oriented edges not in $T$. We call \emph{compatibly-oriented tree-rooted map}, a rooted map with a marked spanning tree and a compatible balanced orientation. 
For an edge-labelled graph $G$, we denote by $\mT(G)$ the set of edge-labelled compatibly-oriented tree-rooted maps with underlying graph $G$, and we denote by $\mB(G)$ the set of bi-Eulerian tours of~$G$. 



\begin{lemma}\label{lem:Eulerian=embedding2}
Let $G$ be a connected edge-labelled undirected graph. There exists a surjective mapping $\Theta$ between $\mT(G)$ and $\mB(G)$ satisfying: for any bi-Eulerian tour $B$ in $\mB(G)$ there is an  integer $w=w(B)\geq 0$ such that the preimage $\Theta^{-1}(B)$ consists of $2^{w}$ compatibly-oriented tree-rooted maps of external weight $w$. Moreover $w(B)=0$ if and only if the bi-Eulerian tour $B$ never takes an edge twice in the same direction
\end{lemma}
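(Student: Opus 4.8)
The plan is to extend the proof of Lemma~\ref{lem:Eulerian=embedding} by letting the balanced orientation record exactly the edges that the tour traverses twice in the same direction. Fix a root-vertex $v_0$. Given a bi-Eulerian tour $B$, call an edge \emph{unidirectional} if $B$ uses it twice in the same direction and \emph{bidirectional} otherwise, and let $\mO_B$ be the orientation of the unidirectional edges in their common direction of traversal. First I would check that $\mO_B$ is a balanced orientation: since $B$ is closed, at every vertex $v$ the number of times $B$ enters $v$ equals the number of times it leaves; the bidirectional edges contribute equally to both counts, so the unidirectional edges must enter $v$ as often as they leave it, which is exactly balance. Replacing each bidirectional edge of $G$ by two opposite arcs and each unidirectional edge by two parallel arcs (in the direction of $\mO_B$) then yields a directed graph $\vec{G}_B$ that is balanced, of which $B$ is an Eulerian tour with origin $v_0$.

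Next I would apply the BEST Theorem to $\vec{G}_B$ exactly as in Lemma~\ref{lem:Eulerian=embedding}, reading off a spanning tree together with a rooted rotation system, hence (by the Embedding Theorem) a tree-rooted map $M$. The spanning tree $T$ obtained is the set of edges used at the last exit from each vertex $v\neq v_0$; being characterised on $G$ it depends only on $B$, not on how the two parallel arcs of a unidirectional edge are told apart. Each unidirectional edge lying in $T$ is used at a last exit, hence points from a vertex towards its parent, i.e.\ towards $v_0$; so $\mO_B$ is compatible with $T$ and $(M,T,\mO_B)$ is a genuine compatibly-oriented tree-rooted map, whose external weight is the number of unidirectional edges \emph{not} in $T$.

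The multiplicity $2^{w}$ should come from the indistinguishability of the two parallel arcs of each unidirectional edge. Labelling these $m$ pairs turns $\vec{G}_B$ into an ordinary directed graph to which the BEST Theorem applies verbatim, and $B$ lifts to exactly $2^{m}$ labelled Eulerian tours (two for each unidirectional edge, according to which traversal receives which label). When the labels are forgotten and the data is turned into a map, I expect the two labellings of a unidirectional \emph{tree} edge to produce the same map, while the two labellings of a unidirectional \emph{non-tree} edge produce distinct maps; the number of maps over $B$ would then be $2^{m-|\mO_B\cap T|}=2^{w(B)}$, with $w(B)$ equal to the external weight. Finally $w(B)=0$ if and only if $B$ has no unidirectional edge: if some unidirectional edge existed while $w(B)=0$, then $\mO_B$ would be a nonempty balanced orientation contained in the tree $T$, i.e.\ an acyclic subgraph in which in- and out-degrees agree at every vertex, which is impossible.

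The main obstacle is the step that replaces the clean arc/half-edge dictionary of Lemma~\ref{lem:Eulerian=embedding}. For a unidirectional edge the two parallel arcs of $\vec{G}_B$ have their tails at the same vertex, so they cannot both be matched to the tail half-edge; instead one of them must be reassigned to the half-edge at the head. Making this reassignment precise, checking that it still converts rooted rotation systems into orderings of the non-tree outgoing arcs as required by the BEST Theorem, and verifying that it collapses exactly the tree pairs while separating exactly the non-tree pairs, is where the real work lies, and is what should pin down both the factor $2^{w(B)}$ and the claim that every map in the fibre $\Theta^{-1}(B)$ has external weight $w(B)$.
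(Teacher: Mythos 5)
Your setup coincides with the paper's own: you orient the edges traversed twice in the same direction to obtain a balanced orientation $\mO_B$, form the directed graph $\vec{G}_{\mO_B}$ (two opposite arcs per bidirectional edge, two parallel arcs per unidirectional edge), and invoke the BEST Theorem to encode $B$ as a pair consisting of a spanning tree directed toward the origin and orderings $\tau$ of the non-tree outgoing arcs; your observations that $\mO_B$ is balanced, that it is compatible with the BEST tree, and that a nonempty balanced orientation contained in a tree is impossible (which justifies that $w(B)=0$ if and only if $B$ never uses an edge twice in the same direction) are all correct, and the last one is in fact a welcome elaboration of a claim the paper makes only in passing. The gap is exactly where you place it yourself: the claim that forgetting the labels of the two parallel arcs of each unidirectional edge collapses tree pairs and separates non-tree pairs, producing exactly $2^{w}$ maps of external weight $w$ over $B$, is stated as an expectation ("I expect\dots", "where the real work lies") rather than proved, and this claim is the entire quantitative content of the lemma.

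Moreover, the "reassignment" you would need in order to prove it is structurally problematic, not merely fiddly. The BEST data $\tau$ is a per-vertex ordering of \emph{outgoing arcs}, whereas a rotation system is a per-vertex cyclic order of \emph{half-edges}; at a vertex $v$, an edge of $\mO_B$ oriented away from $v$ contributes two outgoing arcs but only one half-edge at $v$, and an edge oriented toward $v$ contributes a half-edge at $v$ but no outgoing arc, so any per-vertex dictionary would require an arbitrary pairing of "away" edges with "toward" edges at each vertex, of which there is no canonical choice. The paper sidesteps this entirely by a counting argument: for a fixed compatible spanning tree $T_0$, the number of rooted rotation systems at a vertex $v\neq v_0$ is $(\deg(v)-1)!/2^{\nl(v)}$ (only non-oriented loops create indistinguishable half-edges), while the number of orderings $\tau$ at $v$ is $(\deg(v)-1)!/2^{\nl(v)+\oo(v)}$ (the two parallel arcs of each oriented edge not in $T_0$ pointing away from $v$ are also indistinguishable), whence a ratio of $2^{\oo(v)}$ per vertex and $2^{\sum_v \oo(v)}=2^{w}$ overall, where $w$ is the external weight of the tree. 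Since the lemma asserts only the \emph{existence} of a surjection whose fiber over $B$ consists of $2^{w(B)}$ maps of external weight $w(B)$ (the external weight is automatic once the tree and the orientation are fixed), this cardinality comparison finishes the proof with no explicit arc-to-half-edge dictionary at all. The shortest repair of your argument is therefore to drop the labelled-lift analysis and replace it by this ratio-of-symmetry-factors count.
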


\begin{proof} 
Observe that to any bi-Eulerian tour $B$ of $G$, one can associate a balanced orientation $\mO(B)$ of $G$ by orienting the edges of $G$ which are taken twice in the same direction during the tour $B$ according to this direction. We now fix a vertex $v_0$ and a balanced orientation $\mO$ and show that there is  a mapping $\Theta$ with the desired properties between set $\mT_0(\mO)\subset \mT(G)$ of compatibly-oriented tree-rooted maps with root-vertex $v_0$ and balanced orientation $\mO$, and the set  $\mB_0(\mO)\subset \mB(G)$ of bi-Eulerian tours with origin $v_0$ such that $\mO(B)=\mO$.

Let $\vec{G}_\mO$ be the directed graph obtained by replacing each edge $e$ of $G$ by two arcs with the same label as $e$: the non-oriented edges of $\mO$ are replaced by two arcs in opposite directions, while the oriented edges of $\mO$ are replaced by two arcs in the direction of $e$. In $\vec{G}_\mO$ the two arcs that come from a loop of $G$ or from an oriented edge of $\mO$ are indistinguishable. Clearly the set $\mB_0(\mO)$ of bi-Eulerian tours of $G$ is in bijection with the set of Eulerian tours of origin $v_0$ on $\vec{G}_\mO$.  Hence, by the BEST Theorem, the set  $\mB_0(\mO)$  is in bijection with the set $\vec{\mT}_0(\mO)$ of pairs $(T_0,\tau)$ made of a spanning tree $T_0$ of $\vec{G}_\mO$ directed toward $v_0$ and a choice $\tau$ for each vertex $v$ of a total order on the outgoing arcs not in $T_0$ incident to $v$. Again here if two arcs comes from a loop of $G$ or from a directed edge of $G$, the spanning tree $T_0$ and total orders $\tau$ do not distinguish between these two arcs.

It remains to establish the relation between $\mT_0(\mO)$ and $\vec{\mT}_0(\mO)$. Observe first that the spanning trees of  $\vec{G}_\mO$ directed toward $v_0$ are in bijection with the spanning trees of $G$ that are compatible with $\mO$. Let $T_0$ be a spanning tree of $\vec{G}_\mO$ directed toward $v_0$ and let $w$ be the external weight of the associated spanning tree $T$ of $G$. We only need to show that the number of rooted rotation systems of $G$ with root-vertex $v_0$ is $2^w$ times the number of choices $\tau$ for each vertex $v$ of $G$ of a total order on the outgoing arcs not in $T_0$. We first consider a vertex $v\neq v_0$.  The half-edges of $G$ incident to $v$ are all distinguishable, except for pairs of half-edges belonging to a non-oriented loop. Hence, there are $(\deg(v)-1)!/2^{\nl(v)}$ different cyclic orderings of the half-edges incident to $v$, where $\deg(v)$ is the degree of $v$ and $\nl(v)$ is the number of incident non-oriented loops. On the other hand, there are $\deg(v)-1$ outgoing arcs incident to $v$ not in $T_0$, and these arcs are all distinguishable except for pairs of half-edges belonging to a loop or to an oriented edge of $\mO$ not in $T_0$. Hence, there are $(\deg(v)-1)!/2^{\nl(v)+\oo(v)}$ different total orderings of the outgoing arcs not in $T_0$ incident to $v$, where $\oo(v)$ is the number of edges of $\mO$ not in $T_0$ oriented away from $v$. A similar reasoning applies for the root-vertex  $v_0$, and shows that there are $2^{\sum_{v} \oo(v)}$ times more rooted rotation systems of $G$ with root-vertex $v_0$ than of choices $\tau$ for each vertex of a total order on the outgoing arcs not in $T_0$. Moreover, the sum   $\sum_{v}\oo(v)$ is equal to the external weight $w$. Lastly, the external weight is 0 if and only if there is no oriented edge in $\mO$ (equivalently the tours in $\mB_0(\mO)$ never take an edge twice in the same direction).
\end{proof}

Putting Lemmas~\ref{lem:uni=Eulerian} and~\ref{lem:Eulerian=embedding2} together (and getting rid of the edge-labellings as in the previous section) one gets: 

\begin{theorem} \label{thm:non-orientable}
Let $q$ be a positive integer. There is a surjective mapping $\Ups$ from the set of compatibly-oriented tree-rooted maps with vertex set $[q]$ on orientable surfaces to the set of $q$-colored unicellular maps on general surfaces satisfying: for all $q$-colored unicellular map $U$,
\begin{itemize}
\item there is an integer $w=w(U)\geq 0$ such that the preimage $\Ups^{-1}(U)$ consists of $2^{w}$ compatibly-oriented tree-rooted maps of external weight $w$; moreover $w(U)=0$ if and only if  $U$ is  orientable,
\item for all $i,j\in[q]$ the number of edges with endpoints of colors $i,j$ in $U$ is equal to the number of edges with endpoints $i,j$ in any compatibly-oriented tree-rooted map in $\Ups^{-1}(U)$.
\end{itemize}
\end{theorem}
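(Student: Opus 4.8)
The plan is to define $\Ups$ as the composition of the mapping $\Theta$ from Lemma~\ref{lem:Eulerian=embedding2} with the inverse of the bijection $\Xi$ from Lemma~\ref{lem:uni=Eulerian}, working first at the level of edge-labelled objects and then discarding the labels as in the proof of Theorem~\ref{thm:orientable}. Fix $n$ and observe that the set of edge-labelled compatibly-oriented tree-rooted maps with vertex set $[q]$ and $n$ edges is the disjoint union of the sets $\mT(G)$, taken over all connected edge-labelled graphs $G$ with vertex set $[q]$ and $n$ edges. For each such $G$, the mapping $\Theta$ sends $\mT(G)$ onto $\mB(G)$; viewing the image as a pair $(G,B)$ and applying $\Xi^{-1}$ produces an edge-labelled $q$-colored unicellular map. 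I would take the edge-labelled version of $\Ups$ to be this composition $\Xi^{-1}\circ\Theta$.

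Next I would read off each claimed property directly from the two lemmas. Surjectivity is immediate, since $\Theta$ is surjective and $\Xi$ is a bijection. For the fibers, given an edge-labelled unicellular map $U$, the pair $(G,B)=\Xi(U)$ is uniquely determined, so the preimage of $U$ equals $\Theta^{-1}(B)$, which by Lemma~\ref{lem:Eulerian=embedding2} consists of $2^{w(B)}$ compatibly-oriented tree-rooted maps of external weight $w(B)$; I would simply set $w(U):=w(B)$. Chaining the orientability criteria of the two lemmas gives $w(U)=0$ if and only if $B$ never takes an edge twice in the same direction, if and only if $U$ is orientable. Finally, every tree-rooted map in $\Theta^{-1}(B)$ has underlying graph $G$, so by Lemma~\ref{lem:uni=Eulerian} the number of edges with endpoints $i,j$ in each of them equals the number of edges with endpoints of colors $i,j$ in $U$.

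It then remains to pass from edge-labelled to unlabelled rooted maps. The symmetric group $S_n$ acts on edge-labellings on both sides, the composition $\Xi^{-1}\circ\Theta$ is equivariant for this action, and the action is free because the edges of a rooted map are all distinguishable; hence each unlabelled object lifts to exactly $n!$ labelled ones on each side. I would conclude that $\Ups$ descends to the unlabelled level and that, the action being free on every fiber, the fiber cardinality $2^{w}$ is preserved exactly. The conceptual content is entirely contained in the two lemmas, so I expect the only delicate point to be this last descent: in contrast with the bijective situation of Theorem~\ref{thm:orientable}, one must verify that the free $S_n$-action preserves the fiber sizes $2^{w}$, rather than merely matching total cardinalities on the two sides.
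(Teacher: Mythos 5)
Your proposal is correct and follows essentially the same route as the paper: the paper's proof consists precisely of composing Lemma~\ref{lem:uni=Eulerian} with Lemma~\ref{lem:Eulerian=embedding2} and then removing edge-labels exactly as in Theorem~\ref{thm:orientable}, since every rooted map with $n$ edges has all edges distinguishable and hence corresponds to $n!$ edge-labelled copies. Your closing observation about checking that the free $S_n$-action is equivariant and preserves the fiber sizes $2^w$ (not just total counts) is the careful spelling-out of what the paper compresses into ``getting rid of the edge-labellings as in the previous section.''
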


We now proceed to count compatibly-oriented tree-rooted maps according to their external weight. We first focus on the simpler structure of near-Eulerian trees. A \emph{near-Eulerian tree} is a rooted plane tree with a (possibly empty) subset of edges oriented toward the root-vertex, together with some dangling half-edges, called \emph{buds}, which are either ingoing or outgoing (at each vertex the rotation system includes both the half-edges in the tree and the buds) such that that at any vertex $v$ the numbers of ingoing and outgoing half-edges (including the buds) are equal.  An example of near-Eulerian tree is given in Figure~\ref{fig:closure}(left). It is easily seen that near-Eulerian trees have the same numbers of ingoing and outgoing buds; we call \emph{external weight} this number. 

Observe that near-Eulerian trees of external weight $w$ are the maps obtained from compatibly-oriented tree-rooted maps by deleting 
 all the non-oriented edges not in the spanning tree, and cutting in two halves the oriented edges not in the spanning tree. 
We will now exhibit a bijection  $\Gamma$ between near-Eulerian trees and  \emph{rooted plane maps}, that is, rooted planar maps (in the sphere) with a marked face\footnote{The bijection $\Gamma$ can be thought as an extension of a bijection due to Schaeffer \cite{Schaeffer:eulerian}  between \emph{Eulerian trees} (near-Eulerian trees in which all the edges of the tree are oriented) and rooted planar maps with vertices of even degree. However, there is a slight difference in the presentation, since the bijection of Schaeffer was actually mapping a subset of Eulerian trees (called \emph{balanced} Eulerian trees) to the set of rooted planar maps with vertices of even degree but no marked face.}.  It will be convenient to identify rooted plane maps with rooted maps embedded in the plane (the marked face become the infinite face of the plane).  If a near-Eulerian tree $T$ is embedded in the plane, an edge drawn from an outgoing bud to an ingoing bud of $T$ is said \emph{counterclockwise around $T$} if it has the infinite face on its right. There is a unique way to draw a set of non-crossing counterclockwise edges around $T$ joining each outgoing bud to an ingoing bud; see Figure~\ref{fig:closure}. We call this process the  \emph{closure} of $T$, and denote by $\Gam(T)$ the rooted plane map obtained.

\fig{width=12cm}{closure}{The closure $\Gamma$: ingoing and outgoing buds are joined counterclockwise around $T$.}

\begin{theorem}\label{thm:Eulerian-bijection}
The closure $\Gam$ is a bijection between near-Eulerian trees and rooted plane maps.
\end{theorem}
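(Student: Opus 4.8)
The plan is to prove that the closure $\Gam$ is a bijection by constructing its inverse explicitly, namely an \emph{opening} operation that, given a rooted plane map $M$, cuts it down to a near-Eulerian tree. The high-level strategy mirrors the standard approach for Schaeffer-type bijections: define a canonical spanning structure of $M$ whose non-tree edges, when cut in half, produce exactly the ingoing/outgoing bud structure that the closure would reseal. First I would verify that $\Gam$ is well-defined: given an embedded near-Eulerian tree $T$, I must show there is a \emph{unique} way to join each outgoing bud to an ingoing bud by non-crossing counterclockwise arcs. This follows from a parenthesis-matching argument on the cyclic sequence of buds read along the contour of the tree in the plane: reading buds counterclockwise around $T$, treat outgoing buds as opening brackets and ingoing buds as closing brackets (or vice versa), and the non-crossing counterclockwise matching is forced. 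The balance condition (equal numbers of ingoing and outgoing half-edges at each vertex, hence equal total numbers of buds) guarantees the bracket word is balanced, so a unique planar matching exists.

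Next I would check that $\Gam(T)$ really is a rooted plane map, i.e.\ that the closure keeps the surface planar and yields a single marked (infinite) face together with a connected graph. Connectivity is immediate since $T$ is already a spanning tree of the vertex set and we only add edges. Planarity and the resulting face structure follow from the fact that all closure arcs are drawn non-crossing in the plane with the infinite face on their right; I would track how the faces of $T$ (which, as a tree, has only the infinite face) get subdivided by the added arcs, and confirm that exactly one face — the one carrying the root — remains as the marked infinite face. The orientation data on the oriented tree-edges and on the buds is consistent with a coherent choice, so the image lands in rooted plane maps as claimed.

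The core of the argument is then the inverse map. Given a rooted plane map $M$ embedded in the plane, I would define a distinguished spanning tree together with an orientation of the remaining edges so that cutting each non-tree edge at its midpoint produces a near-Eulerian tree whose closure returns $M$. The natural candidate is obtained from a contour/exploration of $M$ starting at the root: one performs a depth-first or ``leftmost'' traversal that, at each step, either enters a new vertex (adding a tree-edge) or crosses an already-reached edge, cutting it and recording the two halves as an outgoing bud and an ingoing bud with the induced orientation. I would prove that this opening $\Gam^{-1}$ is well-defined and produces a genuine near-Eulerian tree — in particular that the half-edge orientations respect the balance condition at every vertex, which is where the planarity of $M$ and the counterclockwise convention are used in an essential way. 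I expect the main obstacle to be precisely this step: showing that the opening procedure is canonical (independent of any incidental choices) and that the buds it produces are matched by $\Gam$ in exactly the counterclockwise non-crossing fashion, so that $\Gam \circ \Gam^{-1} = \id$ and $\Gam^{-1}\circ\Gam = \id$. Once the two operations are shown to be mutually inverse via the bracket-matching correspondence between the contour of $T$ and the added arcs of $M$, the bijection follows. Because this closure/opening machinery is somewhat involved, I would defer the detailed verification of the balance and non-crossing properties to the dedicated Section~\ref{sec:proof-planar}, as the paper signals.
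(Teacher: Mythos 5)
Your overall skeleton matches the paper's: check that the counterclockwise matching of buds is well defined (your cyclic bracket-matching argument is fine, and is essentially what the paper leaves implicit), then construct an explicit inverse ``opening'' of a rooted plane map and verify that the two compositions are the identity. But the candidate inverse you propose --- a depth-first/leftmost traversal of $M$ that cuts each already-reached edge and orients its halves ``with the induced orientation'' --- is underspecified, and its natural reading is incorrect. The tree $T$ with $\Gamma(T)=M$ is not characterized by any primal exploration order; it is governed by a \emph{dual} metric. The paper's opening $\Delta$ is built from the \emph{dual-distance orientation}: every edge of $M$ separating two faces at different dual-distance from the outer face is oriented with the farther face on its left, and one then peels faces off level by level, cutting at each step a canonically chosen (\emph{breakable}) edge of each face at dual-distance $1$. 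All the properties you defer as ``the main obstacle'' --- balance of ingoing and outgoing half-edges at every vertex (Lemma~\ref{lem:in=out}), compatibility of the oriented tree edges with the orientation toward the root, and the fact that the cut edges are exactly those re-drawn counterclockwise by $\Gamma$ --- are consequences of this dual-distance structure; none of them comes for free from a DFS.

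Concretely, your rule already fails on a triangle $u,v,w$ (root $u$, vertices in counterclockwise order). The unique near-Eulerian tree closing to it has tree edges oriented $v\to w\to u$ and the cut edge $uv$ oriented $u\to v$ (outgoing bud at $u$, ingoing bud at $v$), because every oriented edge must have the inner face on its left. A traversal that discovers $uv$ as a back edge from $v$ would orient its halves the other way, and balance then fails at $u$ (two ingoing half-edges, no outgoing one); nor does any simple ancestor/descendant or traversal-direction rule work in general, since closure edges can join arbitrary pairs of vertices. Fixing the orientation rule so that balance always holds is exactly the content of the dual-distance orientation. Moreover, injectivity of $\Gamma$ requires showing that \emph{any} near-Eulerian tree $T$ with $\Gamma(T)=M$ induces precisely this orientation --- the paper proves this (Lemma~\ref{lem:dual-orient=induced-orient}) by introducing the $T$-distance and showing it equals the dual-distance, then runs an induction along the opening process --- and your plan does not address this uniqueness step at all. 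So the proposal has the right architecture but is missing the key idea that makes the inverse well defined and canonical.
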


The proof of Theorem~\ref{thm:Eulerian-bijection} is delayed to the next section. We now state its consequences for unicellular maps. A \emph{spanning submap} of a map is a subset of edges giving a connected subgraph containing every vertex. A spanning submap $S$ of an orientable map $M$ is said \emph{planar} if the rotation system of $S$ inherited from $M$ makes $S$ a planar map. A \emph{planar-rooted map} is a rooted map on an orientable surface with a marked spanning planar submap. A $r$\emph{-externally-labelled} planar-rooted map is a pair  $(P,\pi)$ where $P$ is a planar-rooted map whose marked submap has $r$ faces, and  $\pi$ is a permutation of $[r]$.

\begin{theorem}\label{thm:non-orientable-bijection} 
Let $q$ be a positive integer. There exists a surjective mapping $\Psi$ from the set of externally-labelled planar-rooted maps with vertex set $[q]$ on orientable surfaces, to the set of $q$-colored rooted unicellular maps on general surfaces satisfying: for all $q$-colored unicellular map $U$,
\begin{itemize}
\item there is an integer $w=w(U)\geq 0$ such that the preimage $\Psi^{-1}(U)$ consists of $2^{w}$ $(w+1)$-externally-labelled planar-rooted maps; moreover $w(U)=0$ if and only if $U$ is orientable,
\item for all $i\in[q]$ the sum of degrees of the vertices of $U$ colored $i$ is equal to the degree of the vertex $i$ in any planar-rooted map in $\Psi^{-1}(U)$.
\end{itemize}
\end{theorem}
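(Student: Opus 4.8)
The plan is to deduce the statement from Theorem~\ref{thm:non-orientable} by composing its surjection $\Ups$ with a bijection $\Lambda$ between compatibly-oriented tree-rooted maps and externally-labelled planar-rooted maps, both with vertex set $[q]$. I would arrange $\Lambda$ so that a compatibly-oriented tree-rooted map of external weight $w$ is sent to a $(w+1)$-externally-labelled planar-rooted map (whose marked submap has $w+1$ faces), and then set $\Psi=\Ups\circ\Lambda^{-1}$. Surjectivity of $\Psi$ is then immediate, and the preimage structure is inherited from $\Ups$: since $\Ups^{-1}(U)$ consists of $2^{w}$ compatibly-oriented tree-rooted maps of external weight $w$ (with $w=0$ iff $U$ is orientable), its image $\Psi^{-1}(U)=\Lambda(\Ups^{-1}(U))$ consists of $2^{w}$ $(w+1)$-externally-labelled planar-rooted maps. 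The multiplicity $2^{w}$ is thus inherited directly from $\Ups$; the factorial $(w+1)!$ of external labellings does not appear as a preimage multiplicity but only in the global identity $|\{\text{compatibly-oriented, external weight }w\}|=(w+1)!\,|\{\text{planar-rooted, }w+1\text{ faces}\}|$ that $\Lambda$ is meant to realize.

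To build $\Lambda$, I would use the observation stated just before Theorem~\ref{thm:Eulerian-bijection}: deleting the non-oriented non-tree edges and cutting the $w$ oriented non-tree edges into buds turns a compatibly-oriented tree-rooted map into a near-Eulerian tree $N$ of external weight $w$, the non-oriented non-tree edges being remembered on the side. Applying the closure $\Gam$ of Theorem~\ref{thm:Eulerian-bijection} to $N$ yields a rooted plane map $S=\Gam(N)$, which has exactly $w+1$ faces because $\Gam$ adds $w$ edges to a tree. Re-attaching the non-oriented non-tree edges on top of $S$ produces a rooted orientable map $M'$ having $S$ as a planar spanning submap, that is, a planar-rooted map with $w+1$ faces. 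Since $\Lambda$ keeps every half-edge incident to its original vertex (it only re-pairs buds and re-embeds), the degree of each vertex is preserved, and the degree-color assertion then follows by summing the edge-count property of Theorem~\ref{thm:non-orientable} over the second color.

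The delicate point, and the place where the permutation enters, is that $\Gam$ pairs the $2w$ buds in the single canonical counterclockwise (planar) way, whereas the oriented non-tree edges of the original map pair them in a possibly different, non-planar way; consequently $M'$ is in general the \emph{planar re-pairing} of the map we started from and not the map itself. Two pieces of data are needed to invert this. First, $S$ is only a bare planar submap and does not remember which face was the marked (infinite) face of the rooted plane map $\Gam(N)$, yet this is precisely what $\Gam^{-1}$ needs in order to recover the spanning tree $T$ and the buds of $N$; second, one must record the actual pairing of the buds in order to restore the oriented edges. I would therefore prove a bijection between, on one hand, the choice of a marked face of $S$ together with a pairing of the buds of $N$, and on the other hand, the permutations $\pi$ of the $w+1$ faces of $S$; the cardinalities agree because there are $w+1$ faces and $w!$ pairings, giving $(w+1)\cdot w!=(w+1)!$ in total. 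The case $w=0$ should reduce to Theorem~\ref{thm:orientable}: then $S$ is a spanning tree, $\pi$ is the identity on a single face, and $\Lambda$ recovers the bijection of that theorem.

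I expect the main obstacle to be this last step: producing a \emph{canonical} and invertible rule that reads off, from the way the oriented edges are glued in the original map relative to the planar closure of $N$, a well-defined permutation of the faces of $S$, and conversely reconstructing the map from $(S,\pi)$ and the non-submap edges. What makes this subtle is the bookkeeping of rotation systems: re-pairing buds leaves the cyclic order of half-edges at each vertex unchanged but alters the edge set and hence the genus, so the original map and its planar re-pairing $M'$ share the same rotation system while living on different surfaces. I would need to check that the planar rotation system of $S=\Gam(N)$ and the positions of the non-submap edges together determine $M'$ unambiguously, that the permutation $\pi$ together with these data reconstructs the original compatibly-oriented tree-rooted map, and finally that $\Psi=\Ups\circ\Lambda^{-1}$ has exactly the stated $2^{w}$-to-one preimage structure with $w=0$ characterizing orientability.
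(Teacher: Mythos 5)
Your proposal follows essentially the same route as the paper: cut the oriented non-tree edges into buds (a $w!$-to-$1$ step), close planarly via $\Gamma$ while carrying the non-oriented non-tree edges along (a $(w+1)$-to-$1$ step, the lost datum being the marked face), account for the resulting $(w+1)!$ ratio with the external labellings, and compose with $\Ups$. The ``canonical rule'' you anticipate as the main obstacle is not actually needed: the theorem only asserts the existence of a surjection with prescribed fiber cardinalities, so an arbitrary matching of the $(w+1)!$ preimages of each planar-rooted map with its $(w+1)!$ labellings suffices, which is exactly how the paper concludes.
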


Theorem~\ref{thm:non-orientable-bijection} is illustrated in Figure~\ref{fig:summary-bijection}. Observe that tree-rooted maps can be seen as 1-externally-labelled planar-rooted maps, and that the mapping $\Psi$ given in Theorem~\ref{thm:non-orientable-bijection} can be seen as the extension of the bijection $\Phi^{-1}$ given in Theorem~\ref{thm:orientable} for orientable unicellular maps.

\fig{width=\linewidth}{summary-bijection}{Some of the steps of the correspondence between $q$-colored unicellular maps and (non-externally-labelled) planar-rooted maps. Here $q=3$ and the planar submap has $w+1=3$ faces.}

\begin{proof}
Let us call \emph{near-Eulerian tree-rooted maps of external weight $w$} a map obtained by adding some non-oriented edges to a near-Eulerian tree of external weight $w$. Near-Eulerian tree-rooted maps are obtained from  compatibly-oriented tree-rooted maps by cutting in two halves the oriented edges not in the spanning tree. This cutting process $\lambda$ is $w!$-to-1 between  compatibly-oriented tree-rooted of external weight $w$ and near-Eulerian tree-rooted maps of external weight $w$ (since there is $w!$ of pairing  $w$ ingoing buds with $w$ outgoing buds). Moreover, the cutting process $\lambda$ is \emph{degree preserving}  (the degree of any vertex is preserved). 

The bijection $\Gam$ between near-Eulerian trees and plane maps can be seen as a $(w+1)$-to-1 correspondence between near-Eulerian trees with external weight $w$ and rooted planar maps with $(w+1)$ faces (since plane maps are planar maps with a marked face). Moreover, the correspondence $\Gamma$ can easily be extended to a degree-preserving $(w+1)$-to-1 correspondence $\Gamma'$ between near-Eulerian tree-rooted maps of external weight $w$ and planar-rooted maps whose planar submap has $w+1$ faces. Indeed, one can just apply the closure $\Gamma$ to the near-Eulerian tree-rooted maps by ignoring the non-oriented edges not in the spanning tree (see Figure~\ref{fig:summary-bijection}).  Hence combining the cutting process $\lambda$ and the closure $\Gamma'$ gives a degree-preserving $(w+1)!$-to-1 correspondence between compatibly-oriented tree-rooted maps of external weight $w$ and  planar-rooted maps whose planar submap has $w+1$ faces. Combining this correspondence with Theorem~\ref{thm:non-orientable} completes the proof.
\end{proof}

Theorem~\ref{thm:non-orientable-bijection} gives the following enumerative results.

\begin{cor}
The number of $q$-colored rooted unicellular maps with $n$ edges on general surfaces is 
\begin{equation}\label{eq:non-orientable-every-color2}
U_{n}(q)=\sum_{r=1}^{n-q+2}\frac{q!\,r!\,P_{q,r}}{2^{r-1}}\,{2n \choose 2q+2r-4}(2n-2q-2r+3)!!,
\end{equation}
where $P_{q,r}$ is the number of unlabeled rooted planar maps with $q$ vertices and $r$ faces. 
Consequently, the series $\displaystyle U(t,w):=\sum_{n,q\geq 0}\frac{U_{n}(q)}{(2n)!}t^nw^q$ and $\displaystyle \QQ(t,w)=\sum_{q,r> 0}\frac{q!\,r!\,P_{q,r}}{(2q+2r-4)!}t^{q+r-2}w^q$ are related by:  
\beq\label{eq:non-orientable-every-color-GF}
 U(t,w)=\frac12\exp\left(\frac{t}2\right)\QQ\left(\frac{t}2,2w\right).
\eeq
\end{cor}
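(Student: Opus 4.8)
The plan is to derive both identities by counting the externally-labelled planar-rooted maps directly and then reading off $U_n(q)$ from the fibre structure of the surjection $\Psi$ in Theorem~\ref{thm:non-orientable-bijection}. Fix $n$ and $q$. By that theorem, $\Psi$ sends every $(w+1)$-externally-labelled planar-rooted map with $n$ edges and vertex set $[q]$ to a unique $q$-colored unicellular map $U$, and since the fibre $\Psi^{-1}(U)$ consists of exactly $2^{w(U)}$ maps, all of them $(w(U)+1)$-externally-labelled, the number of $r$-externally-labelled planar-rooted maps with $n$ edges and vertex set $[q]$ equals $2^{r-1}$ times the number of $q$-colored unicellular maps $U$ with $w(U)=r-1$. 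Summing over $r$ then recovers $U_n(q)$, so the whole problem reduces to counting $r$-externally-labelled planar-rooted maps.

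This count is carried out exactly as in the orientable computation following Theorem~\ref{thm:orientable}, with the marked planar submap playing the role of the spanning tree. A planar-rooted map whose submap has $r$ faces and vertex set $[q]$ has a submap with $q+r-2$ edges by Euler's formula, so I would first choose the submap as a rooted planar map with vertex set $[q]$ and $r$ faces; there are $q!\,P_{q,r}$ of these, since rooted maps have no nontrivial automorphisms. I would then complete its rooted rotation system by inserting the $2n-2q-2r+4$ half-edges of the remaining edges into the $2q+2r-3$ available positions and pairing them, which by the same insertion-and-pairing argument used for trees contributes ${2n \choose 2q+2r-4}(2n-2q-2r+3)!!$. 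Finally the external permutation of $[r]$ contributes a factor $r!$. Altogether the number of $r$-externally-labelled planar-rooted maps is $r!\,q!\,P_{q,r}{2n \choose 2q+2r-4}(2n-2q-2r+3)!!$; dividing by $2^{r-1}$ and summing over $1\le r\le n-q+2$ (the range forced by $q+r-2\le n$) yields \eqref{eq:non-orientable-every-color2}.

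For the generating-function identity I would rewrite the product ${2n \choose 2q+2r-4}(2n-2q-2r+3)!!$ as $(2n)!/\big((2q+2r-4)!\,2^{\,n-q-r+2}(n-q-r+2)!\big)$, so that $U_n(q)/(2n)!$ becomes $2^{-(n-q+1)}\sum_r q!\,r!\,P_{q,r}/\big((2q+2r-4)!\,(n-q-r+2)!\big)$. Substituting $s=n-q-r+2\ge 0$ and reorganizing, the factor $t^s/(2^s s!)$ sums to $\exp(t/2)$ and detaches, while the surviving sum over $q,r\ge 1$ is recognized as $\tfrac12\,Q(t/2,2w)$ once the powers of two produced by the rescalings $t\mapsto t/2$ and $w\mapsto 2w$ are tracked. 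This gives \eqref{eq:non-orientable-every-color-GF}. I expect the only genuinely delicate points to be reproducing the rotation-system completion count (justifying the binomial and the double factorial precisely as in the tree case) and the bookkeeping of the factors of two—the $2^{r-1}$ from the fibres, the $2^{\,n-q+1}$ from the double factorial, and the substitutions $t\mapsto t/2$, $w\mapsto 2w$—which must cancel to leave exactly the prefactor $\tfrac12$.
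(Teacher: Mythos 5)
Your proposal is correct and follows essentially the same route as the paper: it invokes Theorem~\ref{thm:non-orientable-bijection} to reduce the count to $r$-externally-labelled planar-rooted maps, counts those by choosing the rooted planar submap ($q!\,P_{q,r}$ ways) and then inserting and pairing the remaining $2n-2q-2r+4$ half-edges, and derives the generating-function identity by the same rewriting of ${2n \choose 2q+2r-4}(2n-2q-2r+3)!!$ and extraction of $\exp(t/2)$. Your treatment of the fibre bookkeeping (the factors $2^{r-1}$ and $r!$) is in fact slightly more explicit than the paper's, but the argument is the same.
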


\begin{proof}
By Theorem~\ref{thm:non-orientable-bijection}, the proof of~\eqref{eq:non-orientable-every-color2} amounts to showing that there are $q!P_{q,r}{2n \choose 2q+2r-4}(2n-2q-2r+3)!!$ planar-rooted maps with $n$ edges, vertex set $[q]$ and such that the submap has $r$ faces. One can first choose the planar submap, and then insert the additional edges. There are $q!P_{q,r}$ choice for the planar submap with  vertex set $[q]$ and $r$ faces. The planar submap has $q+r-2$ edges by Euler relation, hence there are $2q+2r-3$ position to insert a half-edge around it. Thus there are ${2n \choose 2q+2r-4}$ ways of inserting the $2n-2q-2r+4$ additional half-edges around the submap and then $(2n-2q-2r+3)!!$ ways of pairing them together. This proves~\eqref{eq:non-orientable-every-color2}.  From this equation, one gets
\begin{eqnarray}
U(t,w)&=&\sum_{n\geq 0 ,q>0}t^nw^q\sum_{r=1}^{n+q-2}\frac{q!\,r!\,P_{q,r}}{2^{n-q+1}(2q+2r-4)!(n-q-r+2)!}\nonumber\\
&=& \frac{1}{2}\sum_{q>0}w^q\sum_{r>0}\frac{q!\,r!\,2^{q}\,P_{q,r}}{(2q+2r-4)!}\sum_{n\geq q+r-2} \frac{t^n}{2^n(n-q-r+2)!}.\nonumber
\end{eqnarray}
Using $\displaystyle \sum_{n\geq q+r-2} \frac{t^n}{2^n(n-q-r+2)!}=\!\left(\frac{t}{2}\right)^{q+r-2}\!\sum_{m\geq 0}\frac{t^m}{2^mm!}=\!\left(\frac{t}{2}\right)^{q+r-2}\!\!\exp\left(\frac{t}2\right)~$ gives~\eqref{eq:non-orientable-every-color-GF}.
\end{proof}

We do not know any closed formula for the numbers $P_{q,r}$. However, standard techniques dating back to Tutte~\cite{Tutte:census4} (recursive decompositions of maps and the quadratic method; see e.g.~\cite{MBM:quadratic}) give the following algebraic equation: 
\beq \label{eq:algebraic-P}
\begin{array}{l}
27\PP^4-(36x+36y-1)\PP^3+(24x^2y+24xy^2-16x^3-16y^3+8x^2+8y^2+46xy-x-y)\PP^2\\
+xy(16x^2+16y^2-64xy-8x-8y+1)\PP-x^2y^2(16x^2+16y^2-32xy-8x-8y+1)=0
\end{array}
\eeq
characterizing the series $\displaystyle \PP\equiv \PP(x,y)=\sum_{q,r> 0}P_{q,r}x^qy^r$. As explained in Section~\ref{sec:recurrence}, the algebraic equation~\eqref{eq:algebraic-P} yields differential equations characterizing the series $\QQ(t,w)$ and $U(t,w)$.\\


\section{Proof of the bijectivity of $\Gam$.}\label{sec:proof-planar}
This section is devoted to the proof of Theorem~\ref{thm:Eulerian-bijection}. Recall that a rooted \emph{plane map} is a rooted map drawn in the plane (this is equivalent to a planar map on the sphere with a marked face). The infinite face of a plane map is called \emph{outer face} and the others are called \emph{inner faces}. A \emph{dual-path for a face} $f$ of a plane map $M$ is a path in the plane starting inside $f$ and ending in the outer face which avoids the vertices of $M$; its \emph{length} is the number of edges of $M$ crossed by the path. The \emph{dual-distance} of a face $f$ is the minimal length of a dual-path for $f$. The dual distance of the inner faces are represented in Figure~\ref{fig:opening}.
The  \emph{dual-distance orientation} of $M$ is defined as the orientation of the subset of edges $e$ of $M$ which are incident to two faces of different dual-distances in such a way that the face with largest dual-distance lies on the left of the oriented edge $e$. The dual-distance orientation is shown in Figure~\ref{fig:opening}.

\begin{lemma}\label{lem:in=out}
The dual-distance orientation of a plane map is such that, at any vertex the numbers of incident ingoing and outgoing half-edges are equal.
\end{lemma}

\begin{proof} Turning around $v$ in counterclockwise direction the dual-distance increases by 1 each time an outgoing half-edge is crossed and decreases by 1 each time an ingoing half-edge is crossed. Hence the number of ingoing and outgoing half-edges crossed must be equal for a complete tour around $v$.
\end{proof}

Let $M$ be a rooted plane map.  For a face $f$ of dual-distance 1, we consider the $k\geq 1$ edges incident to both $f$ and the outer face. By deleting these edges one gets $k$ connected components, one of which $C$ contains the root-vertex. We call \emph{breakable} the edge incident to both $f$ and the outer face that follows $C$ when turning around $f$ in counterclockwise direction. The breakable edges of a map $M$ are represented in Figure~\ref{fig:opening}. 
We now consider the dual-distance orientation of $M$. Observe that all the breakable edges of $M$ are oriented, hence cutting the breakable edges in two halves produces an ingoing and an outgoing bud. Moreover, cutting all the breakable edges of $M$  does not disconnects $M$. Indeed, no cocycle of $M$ contains only breakable edges since a cocycle containing a breakable edge $e$ of a face $f$ also contains another edge $e'$ of $f$ which is either not incident to the outer face (hence not breakable) or incident to the outer face but not breakable (since it is not the breakable edge of $f$). Thus cutting all the  breakable edges of $M$  produces a connected map $M_1$ with some ingoing and outgoing buds and decreases by 1 the dual-distance of every inner face. If $M_1$ has some inner faces, then we can define the breakable edges of $M_1$ as before and consider the map $M_2$ with buds resulting from cutting these edges, etc. We denote by $\Delta(M)$ the rooted plane tree with partial orientation and buds resulting from this ``opening'' process.

\fig{width=\linewidth}{opening}{The opening process $\Delta$. The root-vertex is indicated by a large dot. At each step the dual distance and the breakable edges are indicated.}

\begin{lemma}
For any rooted plane map $M$, the opened map $\Delta(M)$ is a near-Eulerian tree and $\Gamma(\Delta(M))=M$. 
\end{lemma}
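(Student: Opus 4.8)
The plan is to prove both assertions simultaneously by induction on the number of inner faces of $M$ (equivalently, on the number of cutting steps performed by the opening process, or on the external weight). I will treat the intermediate objects $M_1,M_2,\ldots$ uniformly as rooted plane maps carrying some dangling buds, regarding the buds as \emph{inert} decorations that do not bound faces and hence do not affect the dual-distance orientation, the dual-distances, or the notion of breakable edge. With this convention, the closure $\Gamma$ that appears in the induction hypothesis is understood to close only the buds \emph{created by opening}, leaving any pre-existing inert buds untouched.

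\textbf{Part 1 ($\Delta(M)$ is a near-Eulerian tree).} The excerpt already records that cutting the breakable edges never disconnects the current map (the cocycle argument) and that it decreases by $1$ the dual-distance of every inner face; consequently the process terminates exactly when no inner face remains, and a connected plane map with only the outer face is a plane tree, so $\Delta(M)$ is a plane tree with buds. For the balance condition I use Lemma~\ref{lem:in=out}: the dual-distance orientation of $M$ has equally many ingoing and outgoing half-edges at each vertex, and cutting an oriented edge $u\to v$ replaces an outgoing half-edge at $u$ by an outgoing bud and an ingoing half-edge at $v$ by an ingoing bud, so each cut preserves the in/out balance at every vertex; by induction on the cutting steps $\Delta(M)$ is balanced at each vertex, whence it has equally many ingoing and outgoing buds. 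The remaining point is that each oriented tree edge of $\Delta(M)$ is directed toward the root-vertex, which I would deduce from the dual-distance orientation together with the planar structure: removing an oriented tree edge $e$ splits the tree in two, and using that the larger dual-distance lies on the left of $e$ and that the opening process always keeps the root-vertex in the distinguished component $C$ when selecting breakable edges, one checks that $e$ points toward the part containing the root.

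\textbf{Part 2 ($\Gamma(\Delta(M))=M$).} The key structural fact I will exploit is that the non-crossing counterclockwise closure is governed by a nested, stack-like matching of the buds along the contour of the tree, so that innermost matched pairs may be closed before outermost ones, and the \emph{outermost} arcs (those incident to the outer face of the closed map) are precisely the arcs matching the buds lying closest to the infinite face. I will show that these outermost arcs coincide with the first-layer breakable edges $B$ of $M$: cutting $B$ produces exactly the buds adjacent to the infinite region of $M_1$, and the definition of breakable edge (the edge of a dual-distance-$1$ face $f$ that follows, counterclockwise around $f$, the component $C$ containing the root) is tailored precisely so that rejoining the corresponding outgoing and ingoing buds counterclockwise around the tree, with the infinite face on the right, reproduces each such edge in its original position. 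Writing $M_1$ for $M$ with $B$ cut, one has $\Delta(M)=\Delta(M_1)$; closing all buds of $\Delta(M_1)$ may then be carried out by first closing the buds of the later layers, which by the induction hypothesis applied to $M_1$ reconstructs $M_1$ away from the $B$-buds, and then closing the $B$-buds, which restores $B$ and hence returns $M$.

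\textbf{Main obstacle.} The crux is the compatibility claim in Part 2, namely that the canonical counterclockwise non-crossing matching used by $\Gamma$ pairs the buds exactly as the opening process cut them, layer by layer and in the correct cyclic positions; this amounts to verifying that the specific choice of breakable edge is the exact local inverse of the counterclockwise closure rule. I expect the difficulty to lie here, in the single-edge inversion analysis of the contour of $\Delta(M)$ and the relative cyclic positions of paired buds, rather than in the global bookkeeping: the tree and balance properties of Part 1 and the layer-separation of $\Gamma$ are comparatively routine once the local inversion is in place. The orientation-toward-root verification in Part 1 is a secondary subtlety to be dispatched along the way.
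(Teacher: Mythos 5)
Your proposal is a proof plan rather than a proof: the step you yourself flag as the ``main obstacle'' --- that the non-crossing counterclockwise matching performed by $\Gamma$ pairs the buds of $\Delta(M)$ exactly as the opening process cut them, layer by layer and in the right cyclic positions --- is precisely the content of the assertion $\Gamma(\Delta(M))=M$, and it is left unproven. The layer-by-layer induction moreover makes the task harder rather than easier: to run it you would first have to show that the matching of \emph{all} buds of $\Delta(M)$ never pairs a bud created at the first cutting step with a bud created at a later step (otherwise the closure does not factor as ``close the later layers, then close the $B$-buds''), and you would also need to reformulate the statement for maps carrying inert buds so that the induction hypothesis applies to $M_1$; both of these are the crux in disguise. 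The paper avoids all of this by exploiting the uniqueness built into the definition of $\Gamma$: since there is a \emph{unique} way to join the outgoing buds to the ingoing buds of $\Delta(M)$ by non-crossing counterclockwise edges, it suffices to check that the edges deleted by the opening process, left in their original positions in the plane, already form such a system. Non-crossing is immediate because $M$ is planar, and counterclockwise follows from the dual-distance orientation (every breakable edge is oriented with the face of larger dual-distance on its left, hence the outer region on its right). This short argument replaces your entire stack/contour analysis, and no induction on layers is needed.

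There is a second, smaller gap in Part 1. The claim that every oriented edge of $\Delta(M)$ points toward the root is dispatched with ``one checks that $e$ points toward the part containing the root'', but this is exactly where the definition of the breakable edge must be used, and the verification is not routine. The paper's argument is: consider the opening step at which both sides of $e$ become the outer face; just before that step, $e$ is incident to the outer face and to an inner face $f$ of dual-distance $1$, with $f$ on its left; since $e$ survives into the tree it is not the breakable edge of $f$, and cutting the breakable edge of $f$ turns $e$ into a bridge whose head, by the very choice of the breakable edge (the edge following the root component $C$ counterclockwise around $f$), lies on the side containing the root-vertex. By contrast, your balance argument (each cut replaces an outgoing half-edge by an outgoing bud and an ingoing half-edge by an ingoing bud, so Lemma~\ref{lem:in=out} propagates through the opening) is correct and is in fact spelled out more carefully than in the paper.
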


\begin{proof} 
Let $T=\Delta(M)$. By Lemma~\ref{lem:in=out}, each vertex of $T$ is incident to as many ingoing and outgoing half-edges. We now consider an edge of $T$, and want to prove that it is not oriented in the wrong direction in $T$. By definition, if $e$ is oriented, then is is incident to two faces of different dual distances in $M$. We consider the step of the opening process $\Delta$ at which both sides of $e$ became equal to the outer face. Just before this step, the edge $e$ was incident to the outer face and to an inner face $f$ of dual-distance 1 (and $e$ had $f$ on its left by definition of the dual-distance orientation). Since $e$ is in $T$, it means that $e$ is not the breakable edge of $f$. Therefore, the definition of the breakable edge of $f$ implies that cutting this breakable edge makes $e$ a bridge oriented toward the component containing the root-vertex. Hence the orientation of $e$ coincides with the orientation  toward the root-vertex of $T$. Thus, $T=\Delta(M)$  is a near-Eulerian tree. 

We now need to show that $\Gamma(T)=M$. Clearly the pairs of outgoing and ingoing buds of $T$ which came from cutting breakable edges of $M$ can all be joined in a non-crossing way (since $M$ is planar). Moreover, it is clear from the definition of the dual-distance orientation that any breakable edge is counterclockwise around~$T$. Thus $\Gamma(T)=M$ by definition of the closure~$\Gamma$.
\end{proof}

To complete the proof of  Theorem~\ref{thm:Eulerian-bijection} it only remains to prove that the closure~$\Gamma$ is injective.  Let $M$ be a plane map and let $T$ be a near-Eulerian tree such that $M=\Gam(T)$. The tree $T$ induces an orientation of a subset of edges of $M$: the subset of oriented edges of $T$ (oriented toward the root-vertex of $T$), together with the set of edges formed during the closure (oriented from the outgoing bud to the ingoing bud). We call this partial orientation the \emph{orientation of $M$ induced by $T$}. We now show that any induced orientation of $M$ is equal to the dual-distance orientation. 

\begin{lemma}\label{lem:dual-orient=induced-orient}
Let $T$ be a near-Eulerian tree and let $M=\Gam(T)$ be its closure. The orientation induced by $T$ coincide with the dual-distance orientation of $M$. 
Moreover, for any inner face $f$ of $M$, there is a dual-path for $f$ of minimal length (equal to the dual-distance of $f$) which does not cross any edge of $T$.
\end{lemma}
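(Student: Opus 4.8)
The plan is to prove both assertions together by analysing the nested structure of the closure edges and introducing a \emph{potential} on the faces of $M=\Gam(T)$ that will turn out to coincide with the dual-distance. First I would record the global picture produced by the closure. By construction the edges added during the closure form a family of non-crossing chords in the (single) outer region of the plane tree $T$; being pairwise non-crossing they are \emph{laminar}, so for each face $f$ of $M$ one may define its \emph{nesting depth} $\varphi(f)$ as the number of closure edges that separate $f$ from the outer face.

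I would first establish the ``moreover'' statement in this language: starting inside an inner face $f$ and crossing, one at a time, the closure edges that successively enclose $f$, one obtains a dual-path of length $\varphi(f)$ reaching the outer face. Such a path can be routed so as to cross \emph{only} closure edges, because between two consecutive enclosing chords the tree $T$ creates no additional separation of $f$ from the outside (deleting all closure edges returns the acyclic $T$), so the path can always be pushed around the tree edges it would otherwise meet. This simultaneously shows that $\varphi(f)$ equals the dual-distance of $f$ and that a minimal dual-path may be chosen to avoid $T$.

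Next I would compare the two orientations edge by edge, using $\varphi$ as a candidate potential. For a closure edge the counterclockwise drawing rule places the outer face -- and more generally the region one level closer to the outside -- on its right, so $\varphi$ drops by exactly $1$ from left to right; since the induced orientation sends the outgoing bud to the ingoing bud, its direction is precisely the one putting the larger-$\varphi$ (hence larger dual-distance) face on the left, matching the dual-distance orientation. For an \emph{unoriented} tree edge I would check that the two incident faces have the same nesting depth, so that it is (correctly) left unoriented by the dual-distance orientation. The remaining case is that of an edge of $T$ carrying an orientation toward the root.

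The main obstacle is exactly this last case: one must reconcile the orientation that is \emph{given} on the edges of $T$ (forced, through the balance condition of a near-Eulerian tree, to point toward the root) with the \emph{a posteriori} geometry of $M$, i.e.\ show that each such edge separates two faces whose nesting depths differ by $1$, with the larger on the root side, so that here too the induced and dual-distance orientations agree. I would resolve this by induction on the external weight $w$: peeling off an outermost closure chord (one incident to the outer face) re-exposes a pair of buds and lowers by $1$ the dual-distance of every face it enclosed, and the balance condition forces the tree-path traversed along the outer boundary between these two buds to be oriented consistently toward the root, so that the reduced object is again (the closure of) a near-Eulerian tree with $w-1$ chords; the base case $w=0$ is the plane tree $T=M$, which has a single face and, by the balance condition, no oriented edges, making both assertions trivial. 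Combining the three cases shows that the induced orientation and the dual-distance orientation coincide, completing the proof.
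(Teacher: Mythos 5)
Your potential $\varphi$ is exactly the paper's ``$T$-distance'' (the minimal length of a dual-path avoiding $T$), so the skeleton of your argument is the right one; but two of its load-bearing steps have genuine gaps, both located where the lemma is actually nontrivial, namely at the edges of $T$. First, exhibiting a dual-path of length $\varphi(f)$ that crosses only closure edges proves only that the dual-distance of $f$ is \emph{at most} $\varphi(f)$; it cannot ``simultaneously'' give equality. Equality requires a lower bound: no dual-path, \emph{including those that cross edges of $T$}, can be shorter, and your suggestion that such a path ``can always be pushed around the tree edges it would otherwise meet'' is unjustified -- going around a vertex of $T$ may force the path to cross several chords, so the rerouting is not length-preserving. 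The correct argument, which is the paper's, is that $\varphi$ changes by at most $1$ across \emph{every} edge of $M$, tree edges included, so that any dual-path starting in $f$ crosses at least $\varphi(f)$ edges; but this uses the analysis of tree edges, which at that point of your proof you have not yet carried out, so your order of deductions is circular.

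Second, that analysis of tree edges is itself incomplete. For an unoriented edge of $T$ you only say ``I would check'' that the two incident faces have equal depth, with no argument; this is precisely where the balance condition must enter. The paper does it in one stroke: deleting a tree edge $e$ splits $T$ into $T_1$ (root side) and $T_2$, and summing the balance condition over the vertices of $T_2$ shows that the number $d'$ of chords from $T_1$ to $T_2$ equals the number $d$ of chords from $T_2$ to $T_1$ when $e$ is unoriented, and equals $d+1$ when $e$ is oriented; the counterclockwise rule identifies $d$ and $d'$ with the depths of the two faces incident to $e$, settling the unoriented and oriented cases at once. Your substitute for the oriented case, induction on $w$ by peeling an outermost chord, does not work as stated: removing a chord together with its two buds destroys the balance condition at its two endpoint vertices, so the peeled object is a near-Eulerian tree only after re-orienting tree edges along the path joining those endpoints, and your claim that the balance condition forces that path to be ``oriented consistently toward the root'' is false. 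For instance, take the near-Eulerian tree with root $r$, children $a,b$, edge $\{r,b\}$ oriented $b\to r$, edge $\{r,a\}$ unoriented, an ingoing then an outgoing bud at $a$ (in counterclockwise order after the edge), an ingoing bud at $b$, and an outgoing bud at $r$: the closure joins the outgoing bud of $a$ to the ingoing bud of $b$ by an outermost chord whose tree-path $a,r,b$ consists of one unoriented edge and one edge oriented against the direction of travel, and peeling that chord forces you to orient $\{r,a\}$ and un-orient $\{r,b\}$. After such a repair the induction hypothesis concerns a tree whose orientation differs from that of $T$ exactly on the edges in question, so an additional lifting argument back to $(T,M)$ is needed and is missing -- as is a proof of your claim that peeling lowers the dual-distance of every enclosed face by exactly $1$. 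Filling these holes essentially amounts to the paper's cut-counting argument, which I would use instead of the induction.
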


\begin{proof} 
We call $T$\emph{-distance} of a face $f$ of $M$ the minimal length of a dual-path for $f$ which does not cross any edge of $T$. We also define the $T$\emph{-distance orientation} of $M$ as the orientation of the subset of edges $e$ of $M$ which are incident to two faces of different $T$-distances in such a way that the face with largest $T$-distance lies on the left of the oriented edge $e$.

We first show that the $T$-distance orientation coincides with the orientation induced by $T$. Let $e$ be an edge not in $T$. Clearly, the edge $e$ separates two faces whose $T$-distances differ by~1. Moreover, the $T$-distance orientation of $e$ coincides with the orientation induced by $T$: in both of these orientations the face with largest $T$-distance is on the left of $e$ (since $e$ is counterclockwise around $T$). We now consider an edge $e$ in the tree $T$. Deleting $e$ gives two subtrees with buds, say  $T_1$ an $T_2$ with $T_1$ containing the root-vertex. Let $d\geq 0$ be the number of outgoing buds in $T_2$ paired with ingoing buds in $T_1$ during the closure of $T$. The number $d'$ of outgoing buds in $T_1$ paired with ingoing buds in $T_2$ is equal to $d$ if $e$ is non-oriented and is equal to $d+1$ if $e$ is oriented in $T$ (since $e$ is oriented toward $T_1$ in this case). Since all the pairing are counterclockwise around $T$, the $T$-distance of the two faces of $M$ incident to $M$ are respectively $d$ and $d'$. Hence the  $T$-distance orientation of $e$ coincides with the orientation induced by $T$: the edge $e$ is oriented if and only if  $d'=d+1$, and in this case the face of largest $T$-distance $d'$ is on the left of $e$. 

We have shown that the orientation induced by $T$ coincides with the $T$-distance orientation. Thus it only remain to show that  $T$-distance is equal to the dual-distance. Let $f$ be a face of $M$ and let $d,d'$ be respectively its dual-distance and its $T$-distance. Clearly, $d\leq d'$.  We now consider a dual-path $P$ of minimal length $d$ for $f$. The dual-path $P$ starts inside the face $f$ which has $T$-distance $d'$ and ends in the outer face which has $T$-distance 0. Moreover, we have shown above that for any edge $e$ of $M$ the difference of $T$-distance between the two faces separated by $e$ is at most 1. Hence, the $T$-distance decreases by at most 1 each time the path $P$ crosses an edge. This shows that $P$ crosses at least $d'$ edges, so that $d\geq d'$.
\end{proof}

We now complete the proof of the injectivity of $\Gamma$. Let $M$ be  a plane rooted map, and let $T$ be a near-Eulerian tree such that $\Gamma(T)=M$. We want to prove that $T=\Delta(M)$. Given Lemma \ref{lem:dual-orient=induced-orient}, it suffices to prove that $T$ and $\Delta(M)$ have the same edges (since the orientation of the edges and buds of $T$ are uniquely determined by the dual-orientation of $M$). We denote $M_0=M$, and  for $i>0$ we denote by $M_i$ the map with buds obtained after $i$ steps of the opening process. We will now show by induction on $i$ that the edges of $T$ are in $M_i$ for all~$i$ (hence  that $T$ has the same edges as $\Delta(M)$). The case $i=0$ is obvious, and to prove the induction step we can suppose that the edges of $T$ are in $M_i$ and show that none of breakable edges of $M_i$ is in $T$. 
 Let $f$ be a face of dual-distance 1 in $M_i$. Let $E$ be the set of edges incident to both $f$ and the outer face of $M_i$. The second statement in Lemma~\ref{lem:dual-orient=induced-orient} shows that at least one edge in $E$ is not in $T$ (because $E$ is the set of edges incident to $f$ and faces of smaller dual-distance, so that any dual-path for $f$ of minimal length crosses one of these edges). We call $e$ the edge which is not in $T$ and observe that the edges in $E\setminus \{e\}$ are all in $T$ (because cutting two edges in $E$ would disconnect $M_i$ hence also $T$). Moreover, the first statement in Lemma~\ref{lem:dual-orient=induced-orient} shows that the dual-distance orientation of the edges in $E\setminus \{e\}$ coincide with the orientation of $T$ toward its root-vertex. Therefore, the only possibility is that $e$ is the breakable edge of $f$. This shows that the breakable edge of $f$ is not in $T$. This proves the induction step, thus $T=\Delta(M)$. This completes the proof of the injectivity, hence bijectivity of $\Gamma$. \hfill $\square$\\


\section{Recursion formula for unicellular maps}\label{sec:recurrence}
In this section we sketch a proof of Ledoux's recursion formula~\eqref{eq:rec-ledoux} starting from~\eqref{eq:non-orientable-every-color-GF}. Some of the calculations require a computer algebra system (or a lot of patience).\\

The number $\hU_n(N)$ of unicellular maps with $n$ edges and vertices colored using some colors in $[N]$ can be seen as a polynomial whose coefficients are the numbers $\eta_v(n)$: $\hU_n(x)=\sum_{v=1}^{n+1} \eta_v(n)x^v$. Hence, proving~\eqref{eq:rec-ledoux} amounts to proving for all $n\geq 2$.
$$ 
\begin{array}{r@{\, }c@{\, }l} 
(n+1)\hU_n(x)&=&(4n-1)(2x-1)\hU_{n-1}(x)+(2n-3)(10n^2-9n+8x-8x^2)\hU_{n-2}(x)\\
&&+5(2n-3)(2n-4)(2n-5)(1-2x)\hU_{n-3}(x)\\
&&-2(2n-3)(2n-4)(2n-5)(2n-6)(2n-7)\hU_{n-4}(x).
\end{array}
$$
Equivalently, one has to prove for all $n\geq 0$,
\begin{equation}\label{eq:QFD}
\begin{array}{r@{\,}c@{\,}l}
-(2n)(2n-1)(2n-2)(n+1)\hV_n(x)+(4n-1)(2n-2)(2x-1)\hV_{n-1}(x)\\+(10n^2-9n+8x-8x^2)\hV_{n-2}(x)+5(1-2x)\hV_{n-3}(x)-2\hV_{n-4}(x)&=&0,
\end{array}
\end{equation}
where $\hV_{k}(x)=\hU_{k}(x)/(2k)!$ for all $k$. 
We now translate~\eqref{eq:QFD} in terms of the numbers $U_{n}(q)$. Recall that $\hV_n(x)=\hU_{n}(x)/(2n)!=\sum_{q\in\NN} {x\choose q} \frac{U_{n}(q)}{(2n)!}$ (because for any positive integer $N$, $\sum_{q\in\NN} {N\choose q} U_n(q)=\hU_n(N)$ counts unicellular maps with $n$ edges and vertices colored using some colors in $[N]$, the index $q$ representing the number of colors really used). 
\begin{lemma}\label{lem:Ntoq}
Let $R_n(x),n\in\NN$ be polynomials in $x$. The polynomial $R_n$ decomposes on the basis ${x\choose q},q\in \NN$ and we denote by $R_{n,q}$ the coefficients: $R_n(x)=\sum_{q\in\NN}{x\choose q} R_{n,q}$. 
Let $c_i(x,y),i=1\ldots k$ be bivariate polynomials. Then, the polynomials $R_n(x),n\in\NN$ satisfy a recursion formula 
\beq\label{eq:Ntoq1}
\textrm{for all } n\geq 0,~~~\quad\quad~ \sum_{i=0}^kc_i(n,x)R_{n-i}(x)=0, 
\eeq
(with the convention $R_n(x)=0$ for $n<0$) if and only if the formal power series $R(t,w)=\sum_{n,q\in\NN}R_{n,q}t^nw^q$ satisfies
\beq\label{eq:Ntoq2}
\sum_{i=0}^kc_i(\De_t,\De_w)(t^iR(t,w))=0,
\eeq
where $\De_t$ and $\De_w$ are the operators on formal power series defined by 
$$\De_t:F(t,w)\mapsto t\deriv{t}F(t,w), ~~~\quad\quad\quad\quad~~\De_w:F(t,w)\mapsto w\deriv{w}((1+w)F(t,w)),$$
and $c_i(\De_t,\De_w)$ is the operator obtained by composing these two commuting operators according to $c_i$.
\end{lemma}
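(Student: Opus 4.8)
The plan is to reinterpret the recursion \eqref{eq:Ntoq1} through two operators that encode the two ``coordinates'' of the problem: the index $n$ and the polynomial variable $x$. Concretely, I would establish two dictionary entries for the series $R(t,w)$: the operator $\De_t$ implements multiplication of the coefficient of $t^n$ by $n$, while $\De_w$ implements multiplication of the polynomial $R_n(x)$ by $x$, read off in the basis ${x\choose q}$. Once these are in place, the whole recursion translates term by term, and the equivalence of \eqref{eq:Ntoq1} and \eqref{eq:Ntoq2} follows by comparing coefficients of $t^n$ and using that the ${x\choose q}$ are linearly independent.

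First I would record the action of $\De_t$: since $\De_t(t^nw^q)=n\,t^nw^q$, any polynomial in $\De_t$ multiplies the coefficient of $t^n$ by the corresponding value at $n$. The substantive computation is the $\De_w$ side. Starting from the identity
\beq
x{x\choose q}=(q+1){x\choose q+1}+q{x\choose q},
\eeq
(which follows from $x=(x-q)+q$ and $(x-q){x\choose q}=(q+1){x\choose q+1}$), I would deduce that if $P(x)=\sum_q {x\choose q}a_q$ has $w$-generating function $F(w)=\sum_q a_qw^q$, then the $w$-generating function of $xP(x)$ is exactly $w\deriv{w}((1+w)F(w))=\De_w F$. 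Iterating, $\De_w^b$ realizes multiplication by $x^b$ and $\De_t^a$ realizes multiplication by $n^a$; since $\De_t$ and $\De_w$ act on disjoint variables they commute, so for a monomial $c_i(x,y)=x^ay^b$, and hence for every bivariate polynomial $c_i$, the operator $c_i(\De_t,\De_w)$ is well defined and simultaneously realizes multiplication by $c_i(n,x)$ on each polynomial.

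The delicate point, and where I expect the real content to lie, is the index shift $R_{n-i}$ combined with the fact that the coefficient $c_i(n,x)$ depends on $n$ rather than on $n-i$. This is precisely why \eqref{eq:Ntoq2} applies $c_i(\De_t,\De_w)$ to $t^iR$ and not to $R$: multiplying by $t^i$ moves the coefficient $R_{n-i,q}$ into position $t^nw^q$, so that when $\De_t$ subsequently reads off the exponent of $t$ it returns $n$, exactly the argument wanted in $c_i(n,x)$. I would make this explicit through the commutation rule $\De_t\circ(t^i\,\cdot)=t^i\circ(\De_t+i)$, which shows that the ordering of the two operations is not cosmetic but is what distinguishes $c_i(n,x)$ from $c_i(n-i,x)$. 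Granting this, $c_i(\De_t,\De_w)(t^iR)$ equals $\sum_n t^n$ times the $w$-generating function of $c_i(n,x)R_{n-i}(x)$, with $R_{n-i}=0$ for $n<i$, matching the stated convention.

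Finally I would sum over $i$: the left-hand side of \eqref{eq:Ntoq2} equals $\sum_{n\geq 0}t^n$ times the $w$-generating function of $\sum_{i=0}^k c_i(n,x)R_{n-i}(x)$. Since the ${x\choose q}$ form a basis, this $w$-generating function vanishes if and only if the polynomial $\sum_i c_i(n,x)R_{n-i}(x)$ vanishes; and the full series vanishes if and only if this holds for every $n$. This yields both implications of the lemma at once, with no separate argument needed for the two directions.
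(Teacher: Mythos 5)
Your proof is correct and takes essentially the same route as the paper's: the identity $x{x\choose q}=(q+1){x\choose q+1}+q{x\choose q}$ is exactly the Pascal-rule computation the paper uses to show that $\De_w$ realizes multiplication by $x$ in the basis ${x\choose q}$, and the rest is the same operator dictionary ($\De_t$ for multiplication by $n$, $t^i$ for the index shift) followed by coefficient comparison. Your explicit treatment of the shift via $\De_t\circ(t^i\,\cdot)=t^i\circ(\De_t+i)$, explaining why one gets $c_i(n,x)$ rather than $c_i(n-i,x)$, merely spells out a step the paper's terse proof leaves implicit.
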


\begin{proof} 
Pascal's rule for binomial coefficients implies that for any polynomial $\QQ(x)=\sum_{q\geq 0}P_q{x\choose q}$, one has $x\QQ(x)=\sum_{q\geq 0}q(P_q+P_{q-1}){x\choose q}$. This relation gives the equivalence between~\eqref{eq:Ntoq1} and a (linear) recursion formula for the numbers $R_{n,q}$ with coefficients in $\CC[n,q]$. Writing this relation in terms of operators acting on $R(t,w)$ gives~\eqref{eq:Ntoq2}.
\end{proof}

By Lemma~\ref{lem:Ntoq}, Formula~\eqref{eq:QFD} is equivalent to a linear differential equation on the power series $\displaystyle U(t,w):=\sum_{n,q\geq 0}\frac{U_{n}(q)}{(2n)!}t^nw^q$. This series is equal to $\frac12\exp\left(\frac{t}2\right)\QQ(\frac{t}2,2w)$ by~\eqref{eq:non-orientable-every-color-GF}, hence by factorizing out $\exp\left(\frac{t}2\right)$ in the differential equation one obtains a (linear) differential equation for $\QQ(\frac{t}2,2w)$.
The differential equation obtained for $\QQ(t,w)$ is:
\barray
&& (6tw+4w^2-36t-7w-6)\QQ(t,w)-(12t^{2}+8tw+7w-25){\frac{\partial}{\partial t}}\QQ(t,w)\\
&& +w(w+2)(8w+6t-7){\frac{\partial}{\partial w}}\QQ(t,w)+(2t^{2}-4tw+37t+9){\frac{\partial^{2}}{\partial t^{2}}}\QQ(t,w)\\
&&-w(w+2)(8t+7){\frac{\partial^{2}}{\partial w\partial t}}\QQ(t,w)+2{w}^{2}(w+2)^{2}{\frac{\partial^{2}}{\partial{w}^{2}}}\QQ(t,w)\\
&& +t(8t+11){\frac{\partial^{3}}{\partial t^{3}}}\QQ(t,w)-4wt(w+2){\frac{\partial^{3}}{\partial w\partial t^{2}}}\QQ(t,w)+2t^{2}{\frac{\partial^{4}}{\partial t^{4}}}\QQ(t,w)=0.
\earray
In terms of coefficients, this equation reads: for all integers $q,r$
\beqarray{eq:rec-hP}
&&\displaystyle 2(q+1)(q+2)\hP_{q,r+3}+6(q+2)\hP_{q+1,r+1}-(q+2)(7+8r)\hP_{q+1,r+2}\\
&&\displaystyle-(4q+4r+11)(q+r+2)(q+2)\hP_{q+1,r+3}-12(r+1)\hP_{q+2,r}\\
&&\displaystyle-2(3q^2+6qr+18q+15r+25-r^2)\hP_{q+2,r+1}\\
&&\displaystyle+(q+r+2)(8qr+8r^2+7q+29r+18)\hP_{q+2,r+2}\\
&&\displaystyle+(q+r+4)(q+r+3)(q+r+2)(2q+2r+5)\hP_{q+2,r+3}=0
\eeqarray
where $\displaystyle \hP_{i,j}=\frac{i!\,j!\,P_{i,j}}{(2i+2j-4)!}$ if $i>0, j>0$ and $\hP_{i,j}=0$ otherwise. 

So far we have proved, using~\eqref{eq:non-orientable-every-color-GF}, that Ledoux's formula~\eqref{eq:rec-ledoux} is equivalent to Equation~\eqref{eq:rec-hP} about the numbers  $P_{q,r}$ of planar maps. We will now prove~\eqref{eq:rec-hP} using the algebraic equation~\eqref{eq:algebraic-P} satisfied by the generating function $\PP(x,y)=\sum_{q,r> 0}P_{q,r}x^qy^r$. First of all, the cases $q<-1$ or $r<-2$ of~\eqref{eq:rec-hP} are trivial. The case $r=-2$ is easily checked using the fact that for all $i>0$, $\hP_{i,1}=\frac{i!P_{i,1}}{(2i-2)!}=\frac{1}{(i-1)!}$ (because $P_{i,1}$ is the Catalan number counting plane trees with $i$ vertices). Now, for $q,r\geq -1$, one can multiply Equation~\eqref{eq:rec-hP} by $\frac{(q+1)!(r+1)!}{(2q+2r+4)!}$. The equation then becomes  
$(2q+2r+4) A_{q,r}=0,$
where $A_{q,r}$ is the coefficient of $x^{q+2}y^{r+1}$ in 
\barray
A(x,y)&=&\left(4 y-2 x-1\right)\left(72 \PP(x,y)-72 x{\frac{\partial}{\partial x}}\PP(x,y)-\left(72 y-2\right){\frac{\partial}{\partial y}}\PP(x,y)\right)\\
&&-72 {x}^{2}{\frac{\partial^{2}}{\partial{x}^{2}}}\PP(x,y)+\left(8 {x}^{2}-6 x-24 xy-56 {y}^{2}+10 y+1\right){\frac{\partial^{2}}{\partial{y}^{2}}}\PP(x,y)\\
&&+2 x\left(4 x-80 y-1\right){\frac{\partial^{2}}{\partial y\partial x}}\PP(x,y)\\
&&+\left(4 x-8 y-1\right)\left(y\left(4 x+48 {y}^{2}-8 y-1\right){\frac{\partial^{3}}{\partial{y}^{3}}}\PP(x,y)+48 {x}^{3}{\frac{\partial^{3}}{\partial{x}^{3}}}\PP(x,y) \right)\\
&&+\left(4 x-8 y-1\right)\left(144 {x}^{2}y{\frac{\partial^{3}}{\partial y\partial{x}^{2}}}\PP(x,y) +x\left(4 x-8 y-1+144 {y}^{2}\right){\frac{\partial^{3}}{\partial{y}^{2}\partial x}}\PP(x,y)\right).
\earray
Since $\PP(x,y)$ is algebraic (of degree 4), all its partial derivatives can be expressed as polynomials (of degree at most 3) in $\PP(x,y)$. In particular, one can express $A(x,y)$ as a polynomial in $\PP(x,y)$. Doing so gives $A(x,y)=0$, hence $A_{q,r}=0$ for all integers $q,r$. Thus,~\eqref{eq:rec-hP} holds for all integers  $q,r$. This completes the proof of Ledoux's recursion formula~\eqref{eq:rec-ledoux}.\hfill$\square$

\section{Additional remarks}\label{sec:conclusion}
The correspondence $\Psi$ stated in Theorem~\ref{thm:non-orientable-bijection} does not allow to count \emph{bipartite} unicellular maps (these maps are related to the characters of the hyperoctahedral group). However, one can establish some enumerative results about these maps by combining Theorem~\ref{thm:non-orientable} with some enumerative results about near-Eulerian trees. In particular, one can write a system of algebraic equations characterizing the generating function $G(x,y,z,u)$ of \emph{bicolored} near-Eulerian trees (with black and white vertices), where the variables $x,y$ correspond respectively to the number of black and white vertices, the variable $z$ correspond to the number of outgoing buds, and the variable $u$  correspond to the difference between the numbers of outgoing buds incident to black and ingoing buds incident to white vertices. The number of bipartite maps with vertices colored in $[p]$ and in $\{p+1,\ldots,p+q\}$ respectively forming a bipartition can then be expressed in terms of the coefficients of $x^py^qz^nu^0$ in $G(x,y,z,u)$, for $n\geq 0$.

In Section~\ref{sec:recurrence} we showed how to use formula~\eqref{eq:non-orientable-every-color} in order to prove the recurrence formula~\eqref{eq:rec-ledoux} for the numbers $\eta_v(n)$ of unicellular maps with $v$ vertices and $n$ edges. The same method should apply to any conjectural recurrence formula, hence one could hope to guess and check a ``simpler'' recurrence formula. One can also perform the steps of Section~\ref{sec:recurrence} in the reverse direction: start from any linear differential equation for the series $\PP(x,y)$ of planar maps and translate it into a recurrence formula for  the numbers $\eta_v(n)$. Alas, we have found no formula simpler than~\eqref{eq:rec-ledoux} by this method.\\

\noindent \textbf{Acknowledgment:} I thank Alejandro Morales for several interesting discussions.


\bibliographystyle{abbrv}
\bibliography{biblio-sum-formula}
\label{sec:biblio}
\end{document}